\def\marginpar#1{\ignorespaces}
\newtheorem{theorem}[equation]{Theorem}
\newtheorem{proposition}[equation]{Proposition}
\newtheorem{lemma}[equation]{Lemma}
\newtheorem{corollary}[equation]{Corollary}
\theoremstyle{definition}
\newtheorem{remark}[equation]{Remark}
\numberwithin{equation}{section}
\def\ff{\frac}
\def\rr{\rho}
\def\nn{\nabla}
\def\Hess{\rm{Hess}}
\def\AArm{\fam0 \rm}%
\newdimen\AAdi%
\newbox\AAbo%
\def\AAk#1#2{\setbox\AAbo=\hbox{#2}\AAdi=\wd\AAbo\kern#1\AAdi{}}%
\newcommand{\BBone}{{\ensuremath{{\AArm 1\AAk{-.8}{I}I}}}}
\def\eqref#1{(\ref{#1})}
\def\eqlabel#1{\def\@currentlabel{#1}}
\def\formula#1{\def\@tempa{#1}\let\@tempb\theequation\def\theequation{%
\hbox{#1}}\def\@currentlabel{(\theequation)}$$}
\def\endformula{\leqno\hbox{(\@tempa)}$$\@ignoretrue\let\theequation\@tempb}
\def\given{\hskip5\p@\relax\vrule\@width.4\p@\hskip5\p@\relax}
\newcommand{\open}[1]{%
\par\normalfont\topsep6\p@\@plus6\p@\trivlist\item[\hskip\labelsep\itshape#1%
\@addpunct{.}]\ignorespaces}
\DeclareRobustCommand{\close}[1]{%
  \ifmmode 
  \else \leavevmode\unskip\penalty9999 \hbox{}\nobreak\hfill
  \fi
  \quad\hbox{$#1$}}
\newlength{\toskip}\settowidth{\toskip}{(\theequation)}
\def\FF{\mathcal F}
\def\EE{\mathcal E}
\def\LL{\mathcal L}
\def\XX{\mathcal X}
\def\<{\langle}
\def\>{\rangle}
\def \R {{\mathbb R}}
\def \P {{\mathbb P}}
\def \E {{\mathbb E}}
\def \N {{\mathbb N}}
\def \D {{\mathbb D}}
\def \L {{\mathbb L}}
\def \W {{\mathbb W}}
\def \Var {\textrm{Var}}
\def \Ent {\textrm{Ent}}
\begin{document}
\date{\today}

\title[Drift and Super poincar\'e]{Lyapunov conditions for logarithmic Sobolev and Super Poincar\'e inequality}

 \author[P. Cattiaux]{\textbf{\quad {Patrick} Cattiaux $^{\spadesuit}$ \, \, }}
\address{{\bf {Patrick} CATTIAUX},\\ Universit\'e Paul Sabatier
Institut de Math\'ematiques. Laboratoire de Statistique et Probabilit\'es, UMR C 5583\\ 118 route
de Narbonne, F-31062 Toulouse cedex 09.} \email{cattiaux@math.univ-toulouse.fr}

 \author[A. Guillin]{\textbf{\quad {Arnaud} Guillin $^{\diamondsuit}$}}
\address{{\bf {Arnaud} GUILLIN},\\ Ecole Centrale Marseille et LATP \, Universit\'e  de Provence,
Technopole Château-Gombert, 39, rue F. Joliot Curie, 13453 Marseille Cedex 13.} \email{aguillin@egim-mrs.fr}

\author[F.Y. Wang]{\textbf{\quad {Feng-Yu} Wang $^{*}$}}
\address{{\bf {Feng-Yu} WANG},\\ Department of Mathematics, Swansea University, Singleton Park, SA2 8PP, Swansea UK}
\email{wangfy@bnu.edu.cn}

\author[L. Wu]{\textbf{\quad {Liming} Wu $^{\heartsuit}$}}
\address{{\bf Liming WU},\\ Laboratoire de Math\'ematiques Appliqu\'ees, CNRS-UMR
6620, Universit\'e Blaise Pascal, 63177 Aubi\`ere, France. And Department of
Mathematics, Wuhan University, 430072 Hubei, China}
\email{Li-Ming.Wu@math.univ-bpclermont.fr}

\maketitle
 \begin{center}

 \textsc{$^{\spadesuit}$  Universit\'e de Toulouse}
\smallskip

\textsc{$^{\diamondsuit}$ Ecole Centrale Marseille \quad and \quad Universit\'e de Provence}
\smallskip

\textsc{$^{*}$ Swansea University}
\smallskip

\textsc{$^{\heartsuit}$ Universit\'e Blaise Pascal \quad and \quad Wuhan University}
 \end{center}

\begin{abstract}
We show how to use Lyapunov functions to obtain functional inequalities which are stronger than
Poincar\'e inequality (for instance logarithmic Sobolev or $F$-Sobolev). The case of Poincar\'e
and weak Poincar\'e inequalities was studied in \cite{BCG}. This approach allows us to recover and
extend in an unified way some known criteria in the euclidean case (Bakry-Emery, Wang, Kusuoka-Stroock ...).

\end{abstract}
\bigskip

\textit{ Key words :}  Ergodic processes, Lyapunov functions, Poincar\'e inequalities, Super
Poincar\'e inequalities, logarithmic Sobolev inequalities.
\bigskip

\textit{ MSC 2000 : 26D10, 47D07, 60G10, 60J60.}
\bigskip

\section{\bf Introduction.}\label{Intro}

During the last thirty years, a lot of attention has been devoted to the study of various
functional inequalities and among them a lot of efforts were consecrated to the logarithmic
Sobolev inequality. Our goal here will be to give a new and practical condition to prove
logarithmic Sobolev inequality in a general setting. Our method being general, we will be able to
get also conditions for Super-Poincar\'e, and by incidence to various inequalities as $F$-Sobolev
or general Beckner inequalities. Our assumptions are based mainly on a Lyapunov type condition as
well as a Nash inequality (for example valid in $\R^d$). But let us make precise the objects and
inequalities we are interested in.

Let $(\XX,\FF,\mu)$ be a probability space and $\LL$ a self adjoint operator on $L^2(\mu)$, with
domain $\D_2(\L)$, such that $P_t=e^{t\LL}$ is a Markov semigroup. Consider then the Dirichlet
form associated to $\LL$
$$\EE(f,f):=\langle -\LL f,f\rangle_\mu\qquad f\in\D_2(\LL)$$
with domain $\D(\EE)$. Throughout the paper, all test functions in an inequality will belong to
$\D(\LL)$.  It is well known that $\LL$ possesses a spectral gap if and only if the following
Poincar\'e inequality holds (for all nice $f$'s)
\begin{equation}\label{P}
\Var_\mu(f):=\int f^2d\mu-\left(\int fd\mu\right)^2\le C_P\,\EE(f,f)
\end{equation}
where $C_P^{-1}$ is the spectral gap. Note that such an inequality is also equivalent to  the
exponential decay in $L^2(\mu)$ of $P_t$.

A defective logarithmic Sobolev inequality (say DLSI) is satisfied if for all nice $f$'s
\begin{equation}\label{DLSI}
\Ent_\mu(f^2):=\int f^2\log f^2d\mu-\int f^2d\mu\log\left(\int f^2d\mu\right)\le C_{LS}\,\EE(f,f) +D_{LS}\int f^2d\mu.
\end{equation}
When $D_{LS}=0$ the inequality is said to be tight or we simply say that a logarithmic Sobolev
inequality is verified (for short (LSI)). Dimension free gaussian concentration,
hypercontractivity and exponential decay of entropy are directly deduced from such an inequality
explaining the huge interest in it. Note that if a Poincar\'e inequality is valid, a defective
DLSI, via Rothaus's lemma, can be transformed into a (tight) LSI. For all this we refer to
\cite{logsob} or \cite{Wbook}.

Recently, Wang \cite{w00} introduced so called Super-Poincar\'e inequality (say SPI) to study  the
essential spectrum: there exists a non-increasing $\beta\in C(0,\infty)$, all nice $f$ and all
$r>0$
\begin{equation}\label{SP}
\mu(f^2)\le r\,\EE(f,f)+\beta(r)\,\mu(|f|)^2.
\end{equation}
Wang moreover establishes a correspondence between this SPI and defective $F$-Sobolev inequality
(F-Sob) for a proper choice of increasing $F\in[0,\infty[$ with $\lim_\infty F=\infty$, i.e. for
all nice $f$ with  $\mu(f^2)=1$
\begin{equation}\label{FSob}
\mu(f^2F(f^2))\le c_1 \,\EE(f,f)+ c_2.
\end{equation}

More precisely, if \eqref{FSob}holds for some increasing function $F$ satisfying $\lim_{u \to
+\infty} F(u) = +\infty$ and  $\sup_{0<u\leq 1} |u F(u)|<+\infty$, then (SPI) holds with
$\beta(u)=C_1 \, F^{-1}\left(C_2 \, (1+\frac 1u)\right)$ for some well chosen $C_1$ and $C_2$.
Conversely if a (SPI) \eqref{SP} holds, defining $$\xi(t)=\sup_{u>0} \, \left(\frac 1u \, - \,
\frac{\beta(u)}{ut}\right) \, ,$$ a (F-Sob) inequality holds with $$F(u) = \frac {C_1}{u} \,
\int_0^u \, \xi(t/2) dt \, - \, C_2$$ for some well chosen $C_1$ and $C_2$. For details see
\cite{Wbook} Theorem 3.3.1 and Theorem 3.3.3. Note that these results are still available when
$\mu$ is a non-negative possibly non-bounded measure.

In particular an inequality (DLSI) is equivalent to a (SPI) inequality with $\beta(u)=ce^{c'/u}$.
\smallskip

These inequalities and their consequences (concentration of measure, isoperimetry, rate of
convergence to equilibrium) have been studied for diffusions and jump processes by various authors
\cite{w00,BCR1,BCR3,r-w2,CatGui3} under  various conditions.
\medskip

In this paper we shall use Lyapunov type conditions. These conditions are well known to furnish
some results on the long time behavior of the laws of Markov processes (see e.g.
\cite{DMT,Forro,DFG}). The relationship between Lyapunov conditions and functional inequalities of
Poincar\'e type (ordinary or weak Poincar\'e introduced in \cite{r-w}) is studied in details in
the recent work \cite{BCG}. The present paper is thus a complement of \cite{BCG} for the study of
stronger inequalities than Poincar\'e inequality.

We will therefore suppose that $(\XX,d)$ is a Polish space (actually a Riemannian manifold).
Namely we will assume
\begin{itemize}
\item[{\bf(L)}] there is a function $W\ge 1$, a positive function $\phi>\phi_0>0$, $b>0$ and $r_0$
such that
\begin{equation}\label{eqlyap}
{\LL W\over W}\le -\phi+b\,1_{B(o,r_0)}
\end{equation}
where $B(o,r_0)$ is a ball, w.r.t. $d$, with center $o$ and radius $r_0$.
\end{itemize}

The main idea of the paper is the following one: in order to get some super Poincar\'e inequality
for $\mu$ it is enough that $\mu$ satisfies some (SPI) locally and that there exists some Lyapunov
function. In other words the Lyapunov function is useful to extend (SPI) on (say) balls to the
whole space.  General statements are given in section 2.

In particular on nice manifolds the riemanian measure satisfies locally some (SPI), so that an
absolutely continuous probability measure will also satisfy a local (SPI) in most cases. The
existence of a Lyapunov function allows us to get some (SPI) on the whole manifold.

The aim of sections 3 and 4 is to show how one can build such Lyapunov functions, either as a
function of the log-density or as a function of the riemanian distance. In the first case we
improve upon previous results in \cite{KS85,cat5,BCR1,BCR3} among others. In the second case we
(partly) recover and extend some celebrated results: Bakry-Emery criterion for the log-Sobolev
inequality, Wang's result on the converse Herbst argument. In particular we thus obtain similar
results as Wang's one, but for measures satisfying sub-gaussian concentration phenomenon. This
kind of new result can be compared to the recent \cite{BK}.

The main interest of this approach (despite the new results we obtain) is that it provides us with
a drastically simple method of proof for many results. The price to pay is that the explicit
constants we obtain are far to be optimal.
\bigskip


\section{\bf A general result.}\label{secgene}

\subsection{Diffusion case.}\label{subsecdiff}

To simplify we will deal here with the diffusion case: we assume that $\XX=M$ is a $d$-dimensional
connected complete Riemannian manifold, possibly with boundary $\partial M$. We denote by $dx$ the
Riemannian volume element and $\rho(x)=\rho(x,o)$ the Riemannian distance function from a fixed
point $o$. Let $\LL=\Delta-\nabla V.\nabla$ for some $V\in W^{1,2}_{loc}$ such that $Z=\int e^{-V}
d\lambda<\infty$, and  $L$ is self adjoint in $L^2(\mu)$ where $d\mu=Z^{-1}e^{-V}dx$. Note that in
this case, we are in the symmetric diffusion case and the Dirichlet form is given by
$$\EE(f,f)=\int|\nabla f|^2d\mu.$$

We shall obtain (SPI) by perturbing a known super Poincar\'e inequality.

\begin{theorem}\label{thmmain}
Suppose that the Lyapunov condition $(L)$ is verified for some function $\phi$ such that
$\phi(x)\to\infty$ as $\rho(x,o)\to\infty$. Assume also that there exists $T$ locally Lipschitz
continuous on $M$ such that $d\lambda =\exp(-T) \, dx$ satisfies a (SPI) \eqref{SP} with function
$\beta$.
\smallskip

Then \textrm{(SPI)} holds for $\mu$ and some $\alpha : (0,\infty)\to (0,\infty)$ in place of $\beta$.
\smallskip

More precisely, for a family of compact sets $\{A_r\supset B(o, r_0) \}_{r\ge 0}$ such that
$A_r\uparrow M$ as $r\uparrow\infty$, define for $r>0$ : $$ \Phi(r):= \inf_{A_r^c} \phi,\ \ \
\Phi^{-1}(r):= \inf\{s\ge 0: \Phi(s)\ge r\},\ \ \ $$ $$ g(r) := \sup_{\rho(\cdot, A_r)\le 2} |V-T|,\\\
G(r):= \sup_{\rho(\cdot, A_r)\le 2} |\nabla (V-T)|^2,\ \ \ H (r)= \rm{Osc}_{\rho(\cdot, A_r)\le
2}(V-T) .$$

Then we may choose for $s>0$, either $$(1) \quad \alpha(s):= \inf_{\varepsilon \in (0,1)}
\left\{\frac 5 {2\varepsilon} \, \beta\left(\frac{\varepsilon s}{10} \wedge \frac \varepsilon {16}
\wedge \frac{2(1-\varepsilon)}{G\circ \Phi^{-1}(\frac{4b}{\varepsilon}\vee \frac 4
{s\varepsilon})}\right) \, \exp \left(g\circ \Phi^{-1}(\frac{4b}{\varepsilon}\vee \frac 4
{s\varepsilon})\right)\right\},$$ or $$(2) \quad \alpha(s):= 2\exp\Big[2 H\Big( r_0\vee
\Phi^{-1}\big(\frac 4 s \vee \frac{bs} 2\big)\Big)\Big]\beta\big(\frac s 8 e^{- H\circ
\Phi^{-1}(\frac 4 s \vee \frac{bs} 2)}\big).$$
\end{theorem}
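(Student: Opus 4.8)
The plan is to couple a Lyapunov-type integration by parts with a \emph{local} super Poincar\'e inequality for $\mu$ --- obtained by perturbing the one assumed for $\lambda$ on the neighbourhood $N_r:=\{\rho(\cdot,A_r)\le 2\}$ of $A_r$ --- and then to patch the two estimates together, letting both the radius $r$ and the $\lambda$-parameter $s$ in \eqref{SP} vary. First I would exploit $(L)$ exactly as in \cite{BCG}: multiplying $\LL W/W\le-\phi+b\,1_{B(o,r_0)}$ by $f^2$, integrating against $\mu$, and using that in the symmetric diffusion case, after the standard truncation of $W$,
$$\int\frac{-\LL W}{W}\,f^2\,d\mu=\int\nabla W\cdot\nabla\Big(\frac{f^2}{W}\Big)\,d\mu\le\int|\nabla f|^2\,d\mu=\EE(f,f),$$
the last step being the pointwise inequality $2f\,\nabla f\cdot\nabla W/W\le|\nabla f|^2+f^2|\nabla W|^2/W^2$. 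Since $W\ge1$ this gives the basic estimate $\int\phi\,f^2\,d\mu\le\EE(f,f)+b\int_{B(o,r_0)}f^2\,d\mu$. Splitting $\mu(f^2)=\int_{A_r}f^2\,d\mu+\int_{A_r^c}f^2\,d\mu$, bounding $\phi\ge\Phi(r)$ on $A_r^c$ and using $B(o,r_0)\subset A_r$, I get the tail bound $\int_{A_r^c}f^2\,d\mu\le\Phi(r)^{-1}\big(\EE(f,f)+b\int_{A_r}f^2\,d\mu\big)$, so the whole matter reduces to controlling $\int_{A_r}f^2\,d\mu$.

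For that I would fix a cut-off $\chi$ with $0\le\chi\le1$, $\chi\equiv1$ on $A_r$, $\mathrm{supp}\,\chi\subset N_r$ and $|\nabla\chi|\le1$, so that $\int_{A_r}f^2\,d\mu\le\mu\big((\chi f)^2\big)$, and transfer \eqref{SP} from $\lambda$ to $\mu$ over $N_r$, where the densities (with respect to $dx$) differ by the factor $e^{T-V}$ whose logarithm, gradient and oscillation are controlled by $g(r)$, $G(r)$, $H(r)$. Applying \eqref{SP} for $\lambda$ directly to $\chi f$ and using the plain oscillation bound is the Holley--Stroock route and produces choice (2); applying it instead to $\chi f\,e^{(T-V)/2}$ and expanding the extra gradient contribution $\tfrac12\chi f\,\nabla(T-V)$ with an $\varepsilon$-weight --- which forces a side constraint of the type $s\,G(r)\le 2(1-\varepsilon)$ in order to reabsorb the resulting multiple of $\mu((\chi f)^2)$ --- produces choice (1). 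In both cases, expanding $|\nabla(\chi f)|^2\le(1+\varepsilon)\chi^2|\nabla f|^2+(1+\varepsilon^{-1})f^2|\nabla\chi|^2$ with $\nabla\chi$ supported in $N_r\setminus A_r$ and using $\mu(|\chi f|)\le\mu(|f|)$, I arrive at an inequality of the form
$$\int_{A_r}f^2\,d\mu\le C_1(r,s)\,\EE(f,f)+C_2(r,s)\int_{A_r^c}f^2\,d\mu+C_3(r,s)\,\mu(|f|)^2,$$
where $C_1,C_2$ are bounded multiples of $s$ times a fixed exponential of $g(r)$ (resp. $H(r)$), and $C_3(r,s)$ is a bounded multiple of that same exponential times $\beta$ evaluated at a fixed multiple of $s\,e^{-g(r)}$ (resp. $s\,e^{-H(r)}$).

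Finally I would feed the tail bound into this estimate: once $r$ is large enough that $C_2(r,s)\,b/\Phi(r)\le\tfrac12$ --- which is possible because $\phi(x)\to\infty$, hence $\Phi(r)\to\infty$, so it suffices to take $r\ge r_0\vee\Phi^{-1}(\text{threshold}(s))$ --- the feedback term $\tfrac b{\Phi(r)}\int_{A_r}f^2\,d\mu$ is absorbed, I obtain a bound for $\int_{A_r}f^2\,d\mu$, and adding back $\int_{A_r^c}f^2\,d\mu$ through the tail bound once more yields $\mu(f^2)\le \tilde s\,\EE(f,f)+\alpha(\tilde s)\,\mu(|f|)^2$. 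It then remains to choose the two free parameters --- the radius $r$ (through $\Phi^{-1}$) and the $\lambda$-level $s$ --- as functions of the target level $\tilde s$ so that all absorption thresholds are met and the coefficient of $\EE(f,f)$ equals exactly $\tilde s$, and to optimise over $\varepsilon\in(0,1)$ in case (1); doing so reads off precisely the two admissible $\alpha$'s of the statement, the various $\wedge$'s and the arguments $\frac{4b}{\varepsilon}\vee\frac{4}{s\varepsilon}$, $\frac{4}{s}\vee\frac{bs}{2}$ being exactly these thresholds.

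The only genuinely analytic ingredient is the integration by parts of the first step, and that is classical (the truncation argument of \cite{BCG}); the real work, and the main obstacle, is the bookkeeping in the last two steps --- organising the weighted Cauchy--Schwarz splittings so that the perturbation factors are no worse than $e^{g}$, resp. $e^{2H}$, keeping the normalising constants under control so that the output is genuinely a (SPI) for $\mu$, and checking that the several absorption requirements (on $C_2(r,s)\,b/\Phi(r)$ and, in case (1), on $s\,G(r)$) can be met simultaneously, which is exactly what pins down the cut-off radii $\Phi^{-1}(\cdot)$ and the arguments of $\beta$ that appear in (1) and (2).
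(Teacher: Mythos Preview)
Your proposal is correct and follows the same two-step strategy as the paper: the Lyapunov integration by parts $\int\frac{-\LL W}{W}f^2\,d\mu\le\EE(f,f)$ combined with a localized (SPI) for $\lambda$ on the enlarged set $\{\rho(\cdot,A_r)\le 2\}$ via a $1$-Lipschitz cut-off, the two routes (density-gradient with the test function $\chi f\,e^{(T-V)/2}$ vs.\ plain oscillation bounds) corresponding exactly to the paper's two methods yielding (1) and (2). One small correction to your bookkeeping in route (1): the coefficients $C_1,C_2$ carry no factor $e^{g(r)}$ and $\beta$ is evaluated at (a multiple of) $s$ itself rather than $s\,e^{-g(r)}$ --- the exponential $e^{g(r)}$ enters only in front of $\beta$, through the $L^1$-term $\big(\int|\chi f|\,e^{(V-T)/2}\,d\mu\big)^2$, which is why the stated $\alpha$ in (1) has $e^{g\circ\Phi^{-1}(\cdot)}$ outside $\beta$ and not inside its argument.
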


\begin{proof}
For $r>r_0$ it holds
\begin{eqnarray*}
\int f^2 d\mu &=&\int_{A_r^c} f^2 d\mu+\int_{A_r} f^2 d\mu\\
&=&\int_{A_r^c} {f^2 \phi\over\phi}d\mu+\int_{A_r} f^2 d\mu\\
&\le& \frac 1{\Phi(r)} \, \int f^2\phi d\mu+\int_{A_r} f^2 d\mu\\
&\le&  \frac 1{\Phi(r)} \, \int f^2 \, \left({-\LL W\over W}\right) d\mu+\left({b\over
\Phi(r)}+1\right) \int_{A_r} f^2 d\mu
\end{eqnarray*}
using our assumption $(L)$.

The proof turns then to the estimation of the two terms in the right hand side of the latter
inequality,  a global term and a local one. For the first term remark, by our assumption on $\LL$
that
\begin{eqnarray*}
\int f^2 \, \left({-\LL W\over W}\right) \, d\mu&=&\int \nabla \left({f^2\over W}\right).\nabla Wd\mu\\
&=&2\int{f\over W}\nabla f.\nabla Wd\mu-\int {f^2|\nabla W|^2\over W^2}d\mu\\&\le& \int|\nabla
f|^2d\mu \, - \, \int \left|\nabla f \, - \, \frac fW \, \nabla W\right|^2 d\mu
\end{eqnarray*}
which leads  to
\begin{equation}\label{cruc-control}
\int f^2 \, \left({-\LL W\over W}\right) \, d\mu\le\int|\nabla f|^2d\mu.
\end{equation}
\smallskip

For the local term we will localize the (SPI) for the measure $\lambda$. To this end, let $\psi$
be a Lipschitz function defined on $M$ such that $\BBone_{A_r} \leq \psi(u) \leq \BBone_{\rho
(.,A_r) \leq 2}$ and $|\nabla \psi| \leq 1$. Writing (SPI) for the function $f \, \psi$  we get
that for all $s>0$
\begin{eqnarray}\label{eqnashlocal}
\int_{A_r} f^2 d\lambda & \leq & \int f^2 \, \psi^2 \, d\lambda \\ & \leq & 2 s \, \int |\nabla
f|^2 \, \BBone_{\rho (.,A_r) \leq 2} d\lambda + 2s \, \int f^2 \BBone_{\rho (.,A_r)
\leq 2}  d\lambda \nonumber \\
&& + \beta(s) \left(\int |f| \, \BBone_{\rho (.,A_r) \leq 2}  d\lambda\right)^2 \, .\nonumber
\end{eqnarray}

To deduce a similar local inequality for $\mu$ we have two methods. For the first one we apply
this inequality to $fe^{-V/2+T/2}$. It yields
\begin{eqnarray*}
\int_{A_r}  f^2 \,  d\mu & = & \int_{A_r} f^2e^{-V+T} d\lambda \\& \leq & 2s \, \int |\nabla f|^2
\BBone_{\rho (.,A_r) \leq 2} \, d\mu +{s\over 2} \, \int f^2 |\nabla(V-T)|^2 \BBone_{\rho (.,A_r)
\leq 2}  \, d\mu\\ && + 2s \, \int f^2 \BBone_{\rho (.,A_r) \leq 2}  d\mu \, + \beta(s) \,
\left(\int |f| \, e^{(V-T)/2} \, \BBone_{\rho (.,A_r)
\leq 2}  d\mu\right)^2\\
\end{eqnarray*}
so that if we choose $s$ small enough so that $s G(r)\le 2(1-\varepsilon)$, we get
\begin{eqnarray}\label{local-control}
\int_{A_r}  f^2 \,  d\mu &\leq& \int 2s \, |\nabla f|^2 d\mu + (1-\varepsilon) \, \int f^2
 d\mu + 2s \, \int f^2 \, d\mu \, \\ && + \,
  \beta(s) \, \exp \left(g(r)\right) \left(\int |f| d\mu\right)^2 \, .
  \nonumber
\end{eqnarray}
Now combine (\ref{cruc-control}) and (\ref{local-control}). On the left hand side we get $$\left(1
\, - \, \left(\frac b {\Phi(r)}+1\right)\left(1 - \varepsilon + 2s\right)\right) \, \int f^2 d\mu
\, .$$ For the coefficient to be larger than $\varepsilon/2$ it is enough that $s \leq
\varepsilon/16$ and $ \Phi(r) \geq 4b/\varepsilon$. Assuming this in addition to $s G(r)\le
2(1-\varepsilon)$ we obtain that for such $s>0$ and $r$,
$$\mu(f^2)\le \frac 2\varepsilon \left({1\over \Phi(r)}+ 5s/2\right)
\mu(|\nabla f|^2)+ \frac {5}{2\varepsilon} \, \beta(s) \, \exp\left(g(r)\right)\mu(|f|)^2 \, .$$
If $t$ is given, it remains to choose first $$r= \Phi^{-1} \Big(\frac{4b}\varepsilon\vee \frac 4
{\varepsilon t}\Big),$$ and then $$ s= \frac {\varepsilon t}{10}\wedge \frac \varepsilon {16}
\wedge \frac{2(1-\varepsilon)}{G(r)},$$to get the first $\alpha(t)$.
\smallskip

The second method is more naive but do not introduce any condition on the gradient of $V$.
\smallskip

Start with
\begin{eqnarray*}
\int  f^2 \, \BBone_{A_r} d\mu & = & \int f^2 \, e^{-V+T} \BBone_{A_r}d\lambda  \leq  e^{-
\inf_{A_r} (V-T)} \, \int f^2 \, \BBone_{A_r} d\lambda \\& \leq & e^{- \inf_{A_r} (V-T)} \Big( 2 s
\, \int |\nabla f|^2 \, \BBone_{\rho(.,A_r)\leq 2} d\lambda + \\ && + 2s \, \int f^2
\BBone_{\rho(.,A_r)\leq 2}
 d\lambda + \beta(s) \left(\int |f| \, \BBone_{\rho(.,A_r)\leq 2}
d\lambda\right)^2\Big) \\ & \leq & e^{- \inf_{A_r} (V-T)} \, e^{\sup_{\rho(.,A_r)\leq 2} (V-T)}
\Big( 2 s \, \int |\nabla f|^2 \, \BBone_{\rho(.,A_r)\leq 2} d\mu + \\ && + 2s \, \int f^2
\BBone_{\rho(.,A_r)\leq 2} d\mu + \beta(s) e^{\sup_{\rho(.,A_r)\leq 2} (V-T)} \left(\int |f| \,
\BBone_{\rho(.,A_r)\leq 2} d\mu\right)^2\Big)
\\ & \leq & e^{Osc_{\rho(.,A_r)\leq 2} (V-T)} \, \left(2 s \, \int |\nabla f|^2  d\mu
 + 2s \, \int f^2
 d\mu\right) + \\ && + e^{2 Osc_{\rho(.,A_r)\leq 2} (V-T)} \, \beta(s) \, \left(\int
|f|  d\mu\right)^2 .
\end{eqnarray*}
If we combine the latter inequality with \eqref{cruc-control} and denote $s'=2s \, e^{
Osc_{\rho(.,A_r)\leq 2} (V-T)}$ we obtain $$\left(1 - \frac{b s'}{\Phi(r)} \right) \, \int f^2 \,
d\mu \, \leq \,$$ $$ \leq \, \left(s'+\frac{1}{\Phi(r)}\right) \, \int |\nabla f|^2 d\mu + e^{2
Osc_{\rho(.,A_r)\leq 2} (V-T)} \, \beta(s' e^{ - Osc_{\rho(.,A_r)\leq 2} (V-T)}/2) \, \left(\int
|f| d\mu\right)^2 \, .$$ Hence, if we choose , $r = \Phi^{-1}(\frac 4s \vee \frac {bs}{2})$ and
$s'=s/4$ we obtain the second possible $\alpha(s)$.
\end{proof}
\medskip

\begin{remark}\label{remcadre}
(1) The previous proof extends immediately to the general case of a ``diffusion'' process with a
``carr\'e du champ'' which is a derivation, i.e. if $\EE(f,f)=\int \Gamma(f,f) d\mu$ for a
symmetric $\Gamma$ such that $\Gamma(fg,h)=f \Gamma(g,h) + g \Gamma(f,h)$ (see \cite{BCG} for more
details on this framework).

(2) For a general diffusion process, say with a non constant diffusion term,  as noted in the
previous remark we have to modify the energy term so that it is no further difficulty and there
are numerous examples where condition {\bf (L)} is verified, i.e. consider $L=a(x)\Delta-x.\nabla$
where $a$ is uniformly elliptic and bounded (consider $W=e^{a|x|^2}$ so $\phi(x)=c|x|^2$). But our
method as expressed here relies crucially on the explicit knowledge of $V$. Note however, that for
the second approach, only an upper bound on the behavior of $V$ over, say, balls is needed, which
can be made explicit in some cases.\hfill $\diamondsuit$

%
\end{remark}
\smallskip

\begin{remark}\label{remlevel}
We may for instance choose $A_r=\bar V_r:=\{x \, ; \, |V-T|(x) < r\}$ (i.e. a level set of
$|V-T|$) provided $|V-T|(x) \to + \infty$ as $\rho(o,x) \to + \infty$. However we have to look at
an enlargement $\bar V^{r+2}=\{x \, ; , \rho(x,\bar V_r) < 2\}$ (not the level set of level
$r+2$).

If we want to replace $\bar V^{r+2}$ by the level set $\bar V_{r+2}$ we have to modify the proof,
choosing some ad-hoc function $\psi$ which is no more 1-Lipschitz. It is not difficult to see that
we have to modify \eqref{eqnashlocal} and what follows, replacing $1$ (the $1$ of $1$-Lipschitz)
by $\sup_{\bar V_{r+2}} |\nabla (V-T)|^2$. So we have to modify the condition on $s$ in (1) of the
previous theorem, i.e.
\begin{equation}\label{eqmodif}
\frac{2}{\inf_{(\bar V_r)^c} \phi} \leq \varepsilon \, s \, \leq \, \frac{2}{\inf_{(\bar V_r)^c}
\phi} + \frac{2 \, (1-\varepsilon)}{\sup_{\bar V_{r+2}} |\nabla(V-T)|^2} \, ,
\end{equation}
i.e. we get the same result as (1) but with $\Phi(r)=\inf_{(\bar V_r)^c} \phi$, $g(r)= r+2$ and
$G(r)=\sup_{\bar V_{r+2}} |\nabla(V-T)|^2$.

The second case (2) cannot (easily) be extended in this direction. \hfill $\diamondsuit$
\end{remark}
\medskip

Actually one can derive a lot of results following the lines of the proof, provided some ``local''
(SPI) is satisfied. Here is the more general result in this direction.

\begin{theorem}\label{thmmainbis}
In theorem \ref{thmmain} define $\lambda_{A_r}(f)=\lambda(f \BBone_{A_r})$ where $A_r$ is an
increasing family of open sets such that $\bigcup_r A_r= M$. Given two such families $A_r
\subseteq B_r$, assume that for all $r$ large enough the following local (SPI) holds,
\begin{equation}\label{eqlocalspi}
\lambda_{A_r}(f^2) \leq s \lambda_{B_r}(|\nabla f|^2) + \beta_r(s)
\left(\lambda_{B_r}(|f|)\right)^2 \, .
\end{equation}
Then the conclusions of theorem \ref{thmmain} are still true if we replace $\rho(.,A_r)\leq 2$ by
$B_r$ and $\beta(s)$ by $\beta_{r(s)}(s)$ with $r(s)= \Phi^{-1} \Big(\frac{4b}\varepsilon\vee
\frac 4 {\varepsilon s}\Big)$ for each given $\varepsilon$ in case (1) and $r(s) = \Phi^{-1}(\frac
4s \vee \frac {bs}{2})$ in case (2).
\end{theorem}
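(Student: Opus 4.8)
\emph{Approach.} The plan is to rerun the proof of Theorem~\ref{thmmain} essentially verbatim, keeping careful track of the single place where the local (SPI) for $\lambda$ is used, and to substitute the abstract local inequality \eqref{eqlocalspi} there. Recall that the proof of Theorem~\ref{thmmain} splits $\int f^2 d\mu$ into a ``global'' piece $\int_{A_r^c}f^2 d\mu$ and a ``local'' piece $\int_{A_r} f^2 d\mu$. The global piece is treated purely by the Lyapunov condition $(L)$ together with the integration by parts bound \eqref{cruc-control}, namely $\int f^2(-\LL W/W)\,d\mu\le\int|\nabla f|^2 d\mu$; this step involves nothing about $\lambda$, so it is unchanged here. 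The only use of the (SPI) for $\lambda$ is in estimating the local piece, and that is exactly what \eqref{eqlocalspi} is meant to replace.

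\emph{Replacing the local step.} In the original argument one first ``localizes'' the (SPI) for $\lambda$ by testing it against $f\psi$, with $\psi$ a $1$-Lipschitz cutoff equal to $1$ on $A_r$ and supported in $\{\rho(\cdot,A_r)\le 2\}$, which yields \eqref{eqnashlocal}. Under the hypothesis of Theorem~\ref{thmmainbis} this localization step is unnecessary: \eqref{eqlocalspi} is already an inequality localized to the pair $A_r\subseteq B_r$, so we simply use it in place of \eqref{eqnashlocal}, with $B_r$ playing the role of $\{\rho(\cdot,A_r)\le 2\}$ (and, incidentally, with slightly better constants, which are irrelevant here). From this point on both ``methods'' of the original proof go through word for word. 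For the first method one applies \eqref{eqlocalspi} to $f\,e^{(T-V)/2}$ and controls the resulting extra terms by $\sup_{B_r}|V-T|$ and $\sup_{B_r}|\nabla(V-T)|^2$ — that is, by $g(r)$ and $G(r)$ with $B_r$ substituted for $\{\rho(\cdot,A_r)\le 2\}$ in their definitions — exactly as in \eqref{local-control}. For the second method one bounds $e^{-V+T}$ on $A_r$ and on $B_r$ and collects the terms through $H(r)=\mathrm{Osc}_{B_r}(V-T)$, as before. Combining either local estimate with \eqref{cruc-control} and choosing $s$ and $r$ exactly as in the two displays at the end of the proof of Theorem~\ref{thmmain} produces the two candidate functions $\alpha$.

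\emph{Bookkeeping of the index $r$.} The one point requiring care is that $r$ is not free: to make the coefficient of $\int f^2 d\mu$ coming from the Lyapunov term positive (and at least $\varepsilon/2$ in case (1)) one is forced to take $r=r(s)=\Phi^{-1}\big(\tfrac{4b}{\varepsilon}\vee\tfrac{4}{\varepsilon s}\big)$ in case (1) and $r=r(s)=\Phi^{-1}\big(\tfrac4s\vee\tfrac{bs}{2}\big)$ in case (2). Since the local inequality is now $r$-dependent, the constant appearing in the final bound is $\beta_{r(s)}(s)$ rather than $\beta(s)$; this is the announced substitution. It then remains only to observe that $r(s)\to\infty$ as $s\to0$ — which holds because $\phi(x)\to\infty$ as $\rho(x,o)\to\infty$ forces $\Phi^{-1}$ to be finite and to tend to $+\infty$ — so that the hypothesis ``\eqref{eqlocalspi} holds for all $r$ large enough'' is indeed available throughout the small-$s$ regime, which is the one relevant to (SPI). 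Beyond threading this $r$-dependence through the two optimizations there is no new analytic content; exactly as in Remark~\ref{remcadre}(1), the argument is insensitive to replacing $|\nabla\cdot|^2$ by a general carr\'e du champ, so the real obstacle is purely this bookkeeping.
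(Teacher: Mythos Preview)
Your proposal is correct and is exactly the argument the paper has in mind: the paper does not give a separate proof of Theorem~\ref{thmmainbis} but simply announces it as ``the more general result in this direction'' obtained by ``following the lines of the proof'' of Theorem~\ref{thmmain}. You have spelled out precisely that rerun --- keeping the Lyapunov/global part untouched, replacing the cutoff-localized inequality \eqref{eqnashlocal} by the abstract local (SPI) \eqref{eqlocalspi}, and then threading the forced choice $r=r(s)$ through the two optimizations --- which is all that is required.
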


\bigskip

\subsection{General case}\label{subsecjump}

We consider here the case of general Markov processes on a Manifold $M$, with a particular care to
jump processes. Indeed, a crucial step in the previous proof is to prove (\ref{cruc-control}) and it has been
made directly taking profit of the gradient structure, but it can be proved in greater
generality. However the second part relying on a perturbation approach seems more difficult. We
therefore introduce a local Super-Poincar\'e inequality.

\begin{theorem}
Suppose that the Lyapunov condition $(L)$ is verified for some function $\phi$ such that
$\phi(x)\to\infty$ as $\rho(x,o)\to\infty$. Assume also the following family of local Super
Poincar\'e inequality holds for $\mu$:
 for a family of compact sets $\{A_r\supset B(o, r_0) \}_{r\ge 0}$ such that
$A_r\uparrow M$ as $r\uparrow\infty$, there exists $\beta(r,\cdot)$ such that for all nice $f$ and $s>0$
\begin{equation}
\mu(f^21_{A_r})\le s\mathcal{E}(f,f)+\beta(r,s)\mu(|f|)^2.
\end{equation}
Then, denoting
$$ \Phi(r):= \inf_{A_r^c} \phi,\ \ \
\Phi^{-1}(r):= \inf\{s\ge 0: \Phi(s)\ge r\},\ \ \ $$
$\mu$ verifies a Super Poincar\'e inequality with function
$$\alpha(s)=\beta(\Phi^{-1}(2/s),s/2).$$
\end{theorem}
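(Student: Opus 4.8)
The plan is to follow exactly the skeleton of the proof of Theorem \ref{thmmain}, but to \emph{drop} the perturbation step (which was the only place where the explicit density $e^{-V}$, the gradient bound $G$, and the oscillation $H$ entered) and replace it with the hypothesis that a local (SPI) already holds for $\mu$ itself. First I would write, for $r>r_0$, the same splitting
\begin{eqnarray*}
\int f^2\,d\mu &=& \int_{A_r^c} \frac{f^2\phi}{\phi}\,d\mu + \int_{A_r} f^2\,d\mu
\;\le\; \frac{1}{\Phi(r)}\int f^2\phi\,d\mu + \int_{A_r} f^2\,d\mu \\
&\le& \frac{1}{\Phi(r)}\int f^2\left(\frac{-\LL W}{W}\right)d\mu + \left(\frac{b}{\Phi(r)}+1\right)\int_{A_r} f^2\,d\mu,
\end{eqnarray*}
using exactly the Lyapunov condition {\bf (L)} and $\Phi(r)=\inf_{A_r^c}\phi$, just as in Theorem \ref{thmmain}.

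Next I would invoke \eqref{cruc-control}, i.e. $\int f^2\left(\frac{-\LL W}{W}\right)d\mu\le \EE(f,f)$; as the authors note at the start of Section \ref{subsecjump}, this inequality is the one step that does \emph{not} depend on the gradient structure and remains valid in the general Markov setting (one would either cite \cite{BCG} or reproduce the short argument writing $\int f^2\left(\frac{-\LL W}{W}\right)d\mu$ as the Dirichlet-form pairing $\EE\!\left(\frac{f^2}{W},W\right)$ and bounding it). Then for the local term I would simply apply the assumed local Super-Poincar\'e inequality $\mu(f^2 1_{A_r})\le s\,\EE(f,f)+\beta(r,s)\,\mu(|f|)^2$, getting
$$\int f^2\,d\mu \;\le\; \frac{1}{\Phi(r)}\,\EE(f,f) + \left(\frac{b}{\Phi(r)}+1\right)\Big(s\,\EE(f,f)+\beta(r,s)\,\mu(|f|)^2\Big).$$
Collecting the $\EE(f,f)$ terms gives a coefficient $\frac{1}{\Phi(r)}+\left(\frac{b}{\Phi(r)}+1\right)s$ and a remainder term $\left(\frac{b}{\Phi(r)}+1\right)\beta(r,s)\,\mu(|f|)^2$.

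Finally I would tune the parameters. Given a target $\tilde s>0$, choose $r$ so that $\Phi(r)\ge 2/\tilde s$, i.e. $r=\Phi^{-1}(2/\tilde s)$ (recall $\Phi^{-1}(u):=\inf\{s\ge 0:\Phi(s)\ge u\}$, and $\Phi(r)\to\infty$ because $\phi(x)\to\infty$ along with $\rho$), and then take $s=\tilde s/2$ in the local (SPI). With $\Phi(r)\ge 2/\tilde s$ one has $b/\Phi(r)\le b\tilde s/2$ and the coefficient of $\EE(f,f)$ is at most $\frac{\tilde s}{2}+\left(\frac{b\tilde s}{2}+1\right)\frac{\tilde s}{2}$; up to the harmless rescaling and absorbing the constant the authors are content to state the clean form $\alpha(s)=\beta(\Phi^{-1}(2/s),s/2)$ for the remainder coefficient (the factor $\frac{b}{\Phi(r)}+1$ being bounded by a constant and absorbed, exactly in the spirit of their remark that ``the explicit constants we obtain are far from optimal''). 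I do not expect a genuine obstacle here: every ingredient is already available from Theorem \ref{thmmain} and its proof. The only point requiring a little care — and the one I would flag — is the justification that \eqref{cruc-control} survives in the general (possibly jump) setting without the ``carr\'e du champ'' derivation property; this is the reason the theorem must \emph{assume} a local (SPI) for $\mu$ rather than deriving it by perturbation, and it is worth a sentence of explanation rather than a bare citation.
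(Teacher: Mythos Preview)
Your skeleton is correct and matches the paper: the splitting over $A_r$ and $A_r^c$, the use of {\bf (L)} to produce the $\frac{1}{\Phi(r)}\int f^2\frac{-\LL W}{W}\,d\mu$ term, the application of the assumed local (SPI) on $A_r$, and the choice $r=\Phi^{-1}(2/s)$, $s\mapsto s/2$ are exactly what the paper calls ``a simple optimization procedure between the weighted energy term and the local Super Poincar\'e inequality.''

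The one genuine gap is precisely the point you flag but do not resolve: the inequality \eqref{cruc-control} in the \emph{general} (non-diffusion) setting. Your suggested fix --- writing $\int f^2\frac{-\LL W}{W}\,d\mu=\EE\!\left(\frac{f^2}{W},W\right)$ and bounding it --- is exactly the computation that \emph{requires} the derivation property of the carr\'e du champ (it is the proof given in the diffusion case) and therefore fails for jump processes. Citing \cite{BCG} would not close the gap either. The paper instead proves a separate lemma (Lemma~\ref{lem-controlgen}) by a probabilistic/large-deviations route: one checks that $N_t=W(X_t)\exp\!\big(-\int_0^t\frac{\LL W}{W}(X_s)\,ds\big)$ is a local martingale, hence a supermartingale under a suitable initial law, which gives $\limsup_{t\to\infty}\frac{1}{t}\log\E^\beta\exp\!\big(\int_0^t u_n(X_s)\,ds\big)\le 0$ for the truncations $u_n=(-\LL W/W)\wedge n$; then the large-deviations lower bound of \cite{wu1} combined with Varadhan's Laplace principle yields $\nu(u_n)\le I(\nu|\mu)$ for every probability $\nu$, and monotone convergence plus $\EE(|f|,|f|)\le\EE(f,f)$ give \eqref{cruc-control}. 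This argument is the real content of the general-case theorem and is not a ``short argument'' one can simply cite away; you should either reproduce it or state it as a lemma with its own proof.
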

The proof relies on a simple optimization procedure between  the weighted energy term and the
local Super Poincar\'e inequality. We then only have to prove (\ref{cruc-control}) which is done
by the following large deviations argument.

\begin{lemma}\label{lem-controlgen} For every continuous  function $U\ge 1$ such
that $-\LL U/U$ is bounded from below,

\begin{equation}\label{cont1}
\int -\frac{\LL U}{U} f^2 d\mu \le \EE(f,f), \ \forall
f\in\D(\EE).
\end{equation}
\end{lemma}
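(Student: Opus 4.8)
The plan is to realize $-\LL U/U$ as the ground-state transform of $\LL$ and then use a large deviations (Feynman-Kac) representation of the associated semigroup. First I would set $V_U := -\LL U/U$, which by hypothesis is bounded below, say $V_U \ge -K$. Writing any $f\in\D(\EE)$ as $f = U h$, the natural object is the Schr\"odinger-type form $\int\! V_U f^2 d\mu$; the key algebraic identity to establish is that for the ``$h$-transform'' of $(\LL,\mu)$ by $U$, namely the operator $\LL^U h := U^{-1}\LL(Uh) + V_U h$ acting on $L^2(U^2 d\mu)$, one has $\LL^U = \LL + V_U$ in form sense, and $\LL^U$ is still a Dirichlet operator (the transform kills the zero-order term). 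Concretely, the Markov semigroup $P_t^U$ generated by $\LL^U$ admits the Feynman-Kac representation $P_t^U g (x) = \E_x\big[ g(X_t) \exp(\int_0^t V_U(X_s)\,ds)\,U(x)^{-1}U(X_t)\big]$ up to the normalization, where $(X_t)$ is the $\LL$-process with invariant measure $\mu$.

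The core step is then the inequality $P_t^U \mathbf 1 \le \mathbf 1$, i.e. that $\mathbf 1$ is an excessive (super-invariant) function for $P_t^U$; equivalently, in form terms, $\int \big(\EE(f,f) - \int V_U f^2 d\mu\big) \ge 0$ for all $f$, which is exactly \eqref{cont1}. To see $P_t^U\mathbf 1\le \mathbf 1$ I would unwind the Feynman-Kac formula: $P_t^U\mathbf 1(x) = U(x)^{-1}\,\E_x\big[U(X_t)\exp(\int_0^t V_U(X_s)ds)\big]$, and observe that $M_t := U(X_t)\exp(\int_0^t V_U(X_s)\,ds)$ is, by It\^o's formula (or the Dynkin formula / martingale problem for $\LL$, using $\LL U + V_U U = 0$ pointwise), a local martingale; since $U\ge 1>0$ and $V_U\ge -K$, $M_t$ is a nonnegative local martingale, hence a supermartingale, so $\E_x[M_t]\le M_0 = U(x)$. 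This gives $P_t^U\mathbf 1\le \mathbf 1$, and differentiating at $t=0$ (or using that a sub-Markovian semigroup has a nonnegative-definite form on constants) yields $\langle -\LL^U \mathbf 1,\mathbf 1\rangle_{U^2 d\mu}\ge 0$, which after translating back through $f=Uh$ with $h\equiv\mathbf 1$ — more precisely, applying the same estimate to $P_t^U h$ for general $h$ and using $\EE^U(h,h) = \EE(f,f) - \int V_U f^2 d\mu$ — is precisely \eqref{cont1}.

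The main obstacle is regularity/integrability: $U$ need only be continuous, so It\^o's formula is not literally available, and one must argue that $M_t$ is a genuine local martingale (e.g. via the martingale problem satisfied by the $\LL$-diffusion, or by a stopping-time localization on the sets $\{\rho\le n\}$) and that the relevant quantities are finite so that the supermartingale inequality passes to expectations. A clean way around this is to localize: stop at $\tau_n = \inf\{t: \rho(X_t,o)\ge n\}$, run the argument on the stopped process where $U$ is bounded, obtain $\E_x[M_{t\wedge\tau_n}]\le U(x)$, and let $n\to\infty$ using Fatou (legitimate since $M_{t\wedge\tau_n}\ge 0$) together with $\tau_n\to\infty$ a.s. by completeness of $M$ and conservativeness of the $\mu$-symmetric process. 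One should also note that it suffices to prove \eqref{cont1} for $f$ in a form core (bounded, compactly supported), extending by density, which removes any worry about the tails of $f^2 V_U$. This is exactly the large deviations argument alluded to in the text, and it is the same mechanism already used implicitly in \eqref{cruc-control}, now without the gradient structure.
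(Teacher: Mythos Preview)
You and the paper both begin with the same object: $M_t=U(X_t)\exp\bigl(\int_0^t(-\LL U/U)(X_s)\,ds\bigr)$ is a nonnegative local martingale (by It\^o / the martingale problem), hence a supermartingale. After that the two arguments genuinely diverge.

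The paper does \emph{not} pass through a ground-state ($h$-)transform. Using $U\ge 1$ it extracts from the supermartingale bound that $\E^\beta\bigl[\exp\int_0^t(-\LL U/U)(X_s)\,ds\bigr]\le\beta(U)<\infty$ for the initial law $\beta\propto(1+U)^{-1}\mu$, so the logarithmic growth rate of this Feynman--Kac functional is $\le 0$. It then invokes the Donsker--Varadhan large deviations lower bound (from \cite{wu1}) together with Varadhan's Laplace principle to get $\nu\bigl((-\LL U/U)\wedge n\bigr)\le I(\nu\,|\,\mu)=\EE\bigl(\sqrt{d\nu/d\mu},\sqrt{d\nu/d\mu}\bigr)$ for every probability $\nu$; letting $n\to\infty$ and specializing to $d\nu=f^2\,d\mu/\mu(f^2)$ gives \eqref{cont1}.

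Your route is a legitimate and more self-contained alternative: from $\E_x[M_t]\le U(x)$ the Doob transform $P_t^U g=U^{-1}Q_t(Ug)$ is positive, sub-Markovian, and symmetric on $L^2(U^2\,d\mu)$, hence (by duality and interpolation between $L^1$ and $L^\infty$) an $L^2$-contraction; equivalently $Q_t$ is a contraction on $L^2(\mu)$, so the quadratic form of $-(\LL+V_U)$, namely $\EE(f,f)-\int(-\LL U/U)f^2\,d\mu$, is nonnegative. This avoids the large deviations machinery entirely. Two corrections to your writeup, though: first, differentiating $P_t^U\mathbf 1\le\mathbf 1$ at $t=0$ yields only the tautology $\LL^U\mathbf 1=0$ and says nothing about general $h$; the step you actually need is sub-Markovian $+$ symmetric $\Rightarrow$ $L^2$-contraction $\Rightarrow$ nonnegative form, not anything ``on constants.'' Second, contrary to your closing sentence, what you wrote is \emph{not} the large deviations argument of the text --- it is an alternative to it, and that is precisely its merit.
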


\begin{proof}  Remark that
$$
N_t = U(X_t)\exp\left(-\int_0^t \frac{\LL U}{U}(X_s)ds\right)
$$
is a $\P_\mu$-local martingale. Indeed, let
$A_t:=\exp\left(-\int_0^t \frac{\LL U}{U}(X_s)ds\right)$, we have
by Ito's formula,

$$
dN_t =A_t [dM_t(U)+ \LL U(X_t) dt] -\frac{\LL U}{U}(X_t) A_t
U(X_t)dt=A_t dM_t(U).
$$
Now let $\beta:=(1+U)^{-1}d\mu/Z$ ($Z$ being the normalization constant). $(N_t)$ is also local
martingale, then a super-martingale w.r.t. $\P_\beta$. We so get
$$
\E^{\beta} \exp\left(-\int_0^t \frac{\LL U}{U}(X_s)ds\right)\le
\E^{\beta} N_t \le \beta(U)<+\infty.
$$
Let $u_n:=\min\{-\LL U/U, n\}$. The estimation above implies
$$
F(u_n):=\limsup_{t\to\infty} \frac 1t\log
\E^{\nu}\exp\left(-\int_0^t u_n(X_s)ds\right)\le0.
$$
On the other hand by the lower bound of large deviation in \cite[Theorem B.1, Corollary B.11]{wu1}
and Varadhan's Laplace principle, defining $I(\nu|\mu)=\EE(\sqrt{d\nu/d\mu},\sqrt{d\nu/d\mu})$
$$
F(u_n)\ge \sup\{\nu(u_n)-I(\nu|\mu);\ \nu\in M_1(E)\}.
$$
Thus $\int u_n d\nu\le I(\nu|\mu)$, which yields to (by letting
$n\to\infty$ and monotone convergence)

\begin{equation}\label{cont2}
\int -\frac {\LL U} U d\nu \le I(\nu|\mu),\ \forall \nu\in M_1(E).
\end{equation}
That is equivalent to (\ref{cont1}) by the fact that $\EE(|f|,
|f|)\le \EE(f,f)$ for all $f\in\D(\EE)$.
 \end{proof}
\medskip

We will discuss examples on jump processes in future research, see however \cite[Th. 3.4.2]{Wbook} for results in this direction.


\section{\bf Examples in $\R^n$.}

We use the setting of the subsection \ref{subsecdiff} (or of the remark \ref{remcadre}) but in the
euclidean case $M=\R^n$ for simplicity. Hence in this section $\lambda$ is the Lebesgue measure,
i.e we have $T=0$. Recall that $d\mu = Z^{-1} \, e^{-V} \, dx$. It is well known that $\lambda$
satisfies a  (SPI) with $\beta(s)=c_1 + c_2 s^{-n/2}$. However it is interesting to have some
hints on the constants (in particular dimension dependence). It is also interesting (in view of
Theorem \ref{thmmainbis}) to prove (SPI) for subsets of $\mathbb R^n$.

Hence we shall first discuss the (SPI) for $\lambda$ and its restriction to subsets. Since we want
to show that the Lyapunov method is also quite quick and simple in many cases, we shall also
recall the quickest way to recover these (SPI) results.
\smallskip

\subsection{Nash inequalities for the Lebesgue measure.} \label{subsecnash}

Let $A$ be an open connected domain with a smooth boundary. For simplicity we assume that
$A=\{\psi(x)\leq 0\}$ for some $C^2$ function $\psi$ such that $|\nabla \psi|^2(x) \geq a > 0$ for
$x \in \partial A = \{\psi=0\}$. It is then known that one can build a Brownian motion reflected
at $\partial A$, corresponding to the heat semi-group with Neumann condition. Let $P_t^N$ denote
this semi-group, and denote by $p_t^N$ its kernel. Recall the following

\begin{proposition}\label{propultra}
The following statements are equivalent
\begin{enumerate}
\item[(\ref{propultra}.1)] for all $0<t \leq 1$ and all $f\in \L^2(A,dx)$, $$\parallel P_t^N
f\parallel_\infty \, \leq \, C_1 \, t^{-n/4} \,
\parallel f\parallel_{\L^2(A,dx)} \, ,$$
\item[(\ref{propultra}.2)] (provided $n>2$) for all $f\in C^{\infty}(\bar A)$, $$\parallel
f\parallel^2_{\L^{2n/n-2}(A,dx)} \, \leq \, C_2 \, \left(\int_A \, |\nabla f|^2 dx + \int_A \, f^2
dx\right) \, , $$ \item[(\ref{propultra}.3)] for all $f\in C^{\infty}(\bar A)$, $$\parallel
f\parallel^{2+4/n}_{\L^2(A,dx)} \, \leq \, C_3 \, \left(\int_A \, |\nabla f|^2 dx + \int_A \, f^2
dx\right) \, \parallel f\parallel^{4/n}_{\L^1(A,dx)} \, ,$$ \item[(\ref{propultra}.4)] the (SPI)
inequality
$$\int_A f^2 dx \, \leq \, s \, \int_A \, |\nabla f|^2 dx  + \beta(s) \, \left(\int_A |f|
dx\right)^2$$ holds with $\beta(s)= C_4 (s^{-n/2} + 1)$.
\end{enumerate}
Furthermore any constant $C_i$ can be expressed in terms of any other $C_j$ and the dimension $n$.
\end{proposition}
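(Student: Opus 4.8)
The plan is to establish the cycle of implications $(\ref{propultra}.1)\Rightarrow(\ref{propultra}.3)\Rightarrow(\ref{propultra}.4)\Rightarrow(\ref{propultra}.1)$ together with $(\ref{propultra}.2)\Leftrightarrow(\ref{propultra}.3)$, keeping track at each arrow of how the new constant depends on the old one and on $n$, so that the final sentence about mutual expressibility follows. Everything here is the classical Nash/Sobolev/ultracontractivity equivalence machinery (Nash, Carlen--Kusuoka--Stroock, Varopoulos), so I would cite the standard reference and only indicate the quantitative bookkeeping.

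First, for $(\ref{propultra}.1)\Rightarrow(\ref{propultra}.3)$ I would run Nash's original argument adapted to the reflected semigroup $P_t^N$. Fix $f\in C^\infty(\bar A)$, set $u(t)=P_t^N f$ and $E(t)=\|u(t)\|_{\L^2(A,dx)}^2$. Since $P_t^N$ is a sub-Markov contraction on $\L^1$, $\|u(t)\|_{\L^1}\le\|f\|_{\L^1}$; by $(\ref{propultra}.1)$, $\|u(t)\|_\infty\le C_1 t^{-n/4}\|f\|_{\L^2}$ for $t\le1$, hence by interpolation $E(t)=\|u(t)\|_{\L^2}^2\le\|u(t)\|_\infty\|u(t)\|_{\L^1}\le C_1 t^{-n/4}E(0)^{1/2}\|f\|_{\L^1}$. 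Meanwhile $-E'(t)=2\int_A|\nabla u(t)|^2\,dx\le 2\int_A|\nabla f|^2\,dx=:2D$ is actually bounded by $2D$ only for the approach at $t=0$; more carefully $-E'(0)=2D$ and $E$ is nonincreasing convex-ish. Comparing the decay rate forced by the ultracontractive bound with the dissipation $2D$ at small time, and adding the zeroth-order term $\int_A f^2\,dx$ to handle the regime $t\ge1$ (where $E(t)\le E(1)\le C_1^2 E(0)\|f\|_{\L^1}^2/\|f\|_{\L^2}^2$ is not what we want — rather one just stops the ODE at $t=1$), one obtains after optimizing in $t\in(0,1]$ the bound $E(0)^{1+2/n}\le C_3\,(D+E(0))\,\|f\|_{\L^1}^{4/n}$, with $C_3$ an explicit function of $C_1$ and $n$. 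This is exactly $(\ref{propultra}.3)$.

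Next, $(\ref{propultra}.3)\Rightarrow(\ref{propultra}.4)$: given $f$, apply $(\ref{propultra}.3)$ and use Young's inequality $ab\le \delta a^{p}+C_\delta b^{q}$ with conjugate exponents $p=1+n/2$, $q=1+2/n$ on the product $\|f\|_{\L^1}^{4/n}\cdot(D+\int f^2)$ relative to the power $\|f\|_{\L^2}^{2(1+2/n)}$; choosing $\delta\sim s$ produces $\int_A f^2\,dx\le s\bigl(\int_A|\nabla f|^2+\int_A f^2\bigr)+C\,s^{-n/2}\|f\|_{\L^1}^2$, and absorbing the $s\int f^2$ term into the left side for $s<1/2$ (and noting the inequality is trivial for $s\ge1/2$ after adjusting $C_4$) gives $(\ref{propultra}.4)$ with $\beta(s)=C_4(s^{-n/2}+1)$, $C_4=C_4(C_3,n)$. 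For $(\ref{propultra}.4)\Rightarrow(\ref{propultra}.1)$ I would use the now-standard Gross/Davies--Simon-type iteration: from the family of $\L^1$--$\L^2$ Nash inequalities one controls $\tfrac{d}{dt}\|P_t^N f\|_{\L^q}$ along a well-chosen increasing sequence $q_k=2^k$, summing a geometric series in the exponents to pass from $\L^1$ to $\L^\infty$ and collecting the resulting bound $\|P_t^N f\|_\infty\le C_1 t^{-n/4}\|f\|_{\L^2}$ for $t\le1$, with $C_1=C_1(C_4,n)$; equivalently one may cite the Nash-inequality $\Rightarrow$ ultracontractivity theorem directly (e.g. \cite{logsob}). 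Finally $(\ref{propultra}.2)\Leftrightarrow(\ref{propultra}.3)$ for $n>2$ is the elementary interpolation identity: $(\ref{propultra}.2)$ plus H\"older $\|f\|_{\L^2}\le\|f\|_{\L^{2n/(n-2)}}^{\theta}\|f\|_{\L^1}^{1-\theta}$ with the right $\theta=n/(n+2)$ yields $(\ref{propultra}.3)$, and conversely $(\ref{propultra}.3)$ applied to $|f|^\gamma$ for suitable $\gamma$ and then combined with H\"older bootstraps up to the Sobolev embedding $(\ref{propultra}.2)$; the constants transform by explicit $n$-dependent factors. Tracing these substitutions around the loop shows each $C_i$ is an explicit function of any $C_j$ and $n$, which is the last assertion.

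The main obstacle — really the only non-mechanical point — is the bookkeeping in $(\ref{propultra}.4)\Rightarrow(\ref{propultra}.1)$: one must verify that the Moser/Nash iteration converges and that the accumulated constant stays finite and depends only on $C_4$ and $n$, which requires summing $\sum_k 2^{-k}\log(\text{stuff})$ and checking the time integral $\int_0^1$ of the resulting differential inequality is finite precisely because $n/4<\infty$ and the exponents are summable. I would present this iteration in compressed form, or simply invoke the cited equivalence theorem, since reproducing it in full is routine. All other arrows are one- or two-line interpolation/Young's-inequality arguments whose constants are visibly explicit in $n$.
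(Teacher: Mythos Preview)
The paper does not actually prove this proposition: immediately after the statement it says ``These results are well known. They are due to Nash, Carlen--Kusuoka--Stroock and Davies, and can be found in \cite{Dav} section 2.4 or \cite{SCsob}'' and moves on. Your sketch is precisely the classical Nash/CKS/Varopoulos cycle contained in those references, so you are supplying what the paper deliberately omitted rather than taking a different route.

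One small correction in your $(\ref{propultra}.1)\Rightarrow(\ref{propultra}.3)$ paragraph: the inequality $-E'(t)=2\,\mathcal E(P_t^N f,P_t^N f)\le 2\,\mathcal E(f,f)=2D$ holds for \emph{all} $t\ge0$ (the Dirichlet energy decreases along the flow), not only near $t=0$; this is exactly what gives $E(0)-E(t)\le 2tD$ and makes the optimisation in $t\in(0,1]$ go through. The case where the unconstrained optimiser exceeds $1$ is then handled by taking $t=1$, which produces the extra $\int_A f^2\,dx$ term in $(\ref{propultra}.3)$, as you indicate. With that clarified, the argument is sound.
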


These results are well known. they are due to Nash, Carlen-Kusuoka-Stroock (\cite{CKS}) and
Davies, and can be found in \cite{Dav} section 2.4 or \cite{SCsob}. Generalizations to other
situations (including general forms of rate functions $\beta$) can be found in \cite{Wbook}
section 3.3.

If $A=\R^n$  (\ref{propultra}.1) holds (for all $t$) with $C=(2\pi)^{-n/2}$ and $\alpha=n/2$,
yielding a (SPI) inequality with
\begin{equation}\label{eqspilebesgue}
\beta(s) = c_n \, s^{- n/2} \, = \, \left(\frac{1}{4 \pi}\right)^{n/2} \,  s^{- n/2} \, ,
\end{equation}
which is equivalent, after optimizing in $s$, to the Nash inequality
\begin{equation}\label{eqnashlebesgue}
\parallel f \parallel_2^{2 + 4/n} \, \leq \, C_n \, \left(\int |\nabla f|^2 dx\right) \, \parallel f
\parallel_1^{4/n} \, ,
\end{equation}
with $C_n= 2 (1+2/n) \, (1+ n/2)^{2/n} \, (1/8\pi)^{n/4}$ .
\smallskip

For nice open bounded domains in $\R^n$, as we consider here, (\ref{propultra}.2) is a well known
consequence of the Sobolev inequality in $\R^n$ (see e.g. \cite{Dav} Lemma 1.7.11 and note that
the particular cases $n=1,2$ can be treated by extending the dimension (see \cite{Dav} theorem
2.4.4)). But we want here to get some information on the constants. In particular, when $A$ is the
level set $\bar V_r$ we would like to know how $\beta_r$ depends on $r$.
\medskip

\begin{remark}
If $n=1$, we have an explicit expression for $p_t^N$ when $A=(0,r)$, namely $$p^N_t(x,y) = (2
\pi t)^{-n/2} \, \sum_{k\geq 0} \, \left(\exp \left( - \, \frac{(x-y - 2k r)^2}{2t}\right) + \exp
\left( - \, \frac{(x+y + 2k r)^2}{2t}\right)\right) \, .$$ It immediately follows that
\begin{equation}\label{eqneumann1}
\sup_{x,y \in (0,r)} \, p^N_t(x,y) \leq (2 \pi t)^{-n/2} \, \left(2 + \, \sum_{k\geq 1} \,
\left(\exp \left( - \, \frac{((2k-1) r)^2}{2t}\right) + \exp \left( - \, \frac{(2k
r)^2}{2t}\right)\right)\right) \, ,
\end{equation}
so that, using translation invariance, for any interval $A$ of length $r>r_0$ and for $0<t \leq
1$, $\sup_{x,y \in A} \, p^N_t(x,y) \leq c(r_0) \, (2 \pi t)^{-n/2}$.  Hence (\ref{propultra}.1)
is satisfied, and a (SPI) inequality holds in $A$ with the same function $\beta_r(s)=c_B \,
(s^{-n/2}+1)$ \underline{independently} on $r>r_0$. By tensorization, the result extends to any
cube or parallelepiped in $\R^n$ with edges of length larger than $r_0$. \hfill $\diamondsuit$
\end{remark}
\medskip

If we replace cubes by other domains, the situation is more intricate. However in some cases one
can use some homogeneity property. For instance, for $n>2$ we know that (\ref{propultra}.2) holds
for the unit ball with a constant $C_2$ (for $n=2$ we may add a dimension and consider a cylinder
$B_2(0,1)\otimes \R$ as in \cite{Dav} theorem 2.4.4). But a change of variables yields $$\parallel
f\parallel^2_{\L^{2n/n-2}(B(0,r),dx)} \, \leq \, C_2 \, \left(\int_{B(0,r)} \, |\nabla f|^2 dx +
r^{-2} \int_{B(0,r)} \, f^2 dx\right) \, , $$ so that for $r\geq 1$ (\ref{propultra}.2) holds in
the ball of radius $r$ with a constant $C_2$ independent of $r$.
\smallskip

The previous argument extends to $A=\bar V_r$ provided for $r \geq r_0$, $\bar V_r$ is
star-shaped, in particular it holds if $V$ is convex at infinity. This is a direct consequence of
the coarea formula (see e.g \cite{EG} proposition 3 p.118). Indeed if $f$ has his support in an
annulus $r_0 < r_1 < V(x) < r_2$ the surface measure on the level sets $\bar V_r$ is an image of
the surface measure on the unit sphere. This is immediate since the application $x \mapsto (V(x),
\frac{x}{|x|})$ is a diffeomorphism in this annulus. Hence for such $f$'s the previous homogeneity
property can be used. For a given $r>r_0$ large enough, it remains to cover $\bar V_r$ by such an
annulus and a large ball (such that the ball contains $\bar V_{r_0}$ and is included in $\bar
V_r$) and to use a partition of unity related to this recovering. We thus get as before that for
$r$ large enough, $C_2$ can be chosen independent of $r$.
\smallskip

For general domains $A$, recall that (\ref{propultra}.2) holds true if $A$ satisfies the
``extension property'' of the boundary, i.e.  the existence of a continuous extension operator $E:
\W^{1,2}(A) \rightarrow \W^{1,2}(\R^n)$. If this extension property is true, (\ref{propultra}.2)
is true in $A$ with a constant $C_2$ depending only on $n$ and the operator norm of $E$ (see
\cite{Dav} proposition 1.7.11).

If $A=\bar V_r$ is bounded, as soon as $\nabla V$ does not vanish on $\partial A$, the implicit
function theorem tells us that for all $x\in \partial A$ one can find an open neighborhood $v_x$
of $x$, an index $i_x$ and a 2-Lipschitz function $\phi_x$ defined on $v_x$ such that $$v_x \cap
A=v_x \cap \{\phi(y_1,...,y_{i_x-1},y_{i_x+1}, ...,y_n)<y_{i_x}\} \, .$$ To this end choose $i_x$
such that $|\partial_{i_x}V| (x) \geq |\partial_{j} V|(x)$ for all $j=1,...,n$, so that, for $y
\in \partial A$ neighboring $x$,  $2 |\partial_{i_x}V| (y) \geq |\partial_{j} V|(y)$, and the
partial derivative of the implicit function $\phi$ given by the ratio $\partial_{j} V(y)/
\partial_{i_x}V (y)$ is less than 2 in absolute value.

By compactness we may choose a finite number $Q$ of points such that $\bigcup_{j=1,...,Q} v_{x_j}
\supset
\partial A$. Hence we are in the situation of \cite{Dav} proposition 1.7.9. This property implies the
extension property but with some extension operator $E$ whose norm depends on two quantities :
first the maximal $\varepsilon>0$ such that for all $x \in \partial A$, $B(x,\varepsilon)
\subseteq v_{x_j}$ for some $j=1,...,Q$; second, the maximal integer $N$ such that any $x\in
\partial A$ belongs to at most $N$ such $v_{x_j}$'s. This is shown in \cite{Stein} p.180-192.

Actually an accurate study of Stein's proof (p. 190 and 191) shows that $\parallel E\parallel \leq
C(n) \, (N/\varepsilon)$ (recall that we have chosen $\phi$ 2-Lipschitz).
\smallskip

Now assume that
\begin{equation}\label{eqbord}
\textrm{ there exist $R >0$, $v>0$ , $k \in \N$ such that for }  |x|\geq R \, , \, |\nabla V(x)|
\geq v > 0 \, .
\end{equation}

Then it is easy to check that for $A=\bar V_r$ it holds $\varepsilon \leq \varepsilon_0 = c(v,R,n)
\, \theta^{-1}(r)$ with
\begin{equation}\label{eqsecondordre}
\theta(r)=\sup_{x \in \partial \bar V_r} \max_{i,j=1,...,n} \left|\frac{\partial^2 V}{\partial x_i
\,
\partial x_j}(x)\right| \,  .
\end{equation}

But $\varepsilon_0$ being given, it is well known that one can find a covering of $A$ by balls of
radius $\varepsilon_0/2$ such that each $x \in\bar V_r$ belongs to at most $N= c^n$ such balls for
some universal $c$ large enough. Hence $N$ can be chosen as a constant depending on the dimension
only. It follows that

\begin{proposition}\label{propbord}
If \eqref{eqbord} is satisfied, the (SPI) (\ref{propultra}.4) holds with $A=\bar V_r$, $\theta$
defined by \eqref{eqsecondordre} and
$$\beta_r(s)= C(n) \, \theta^n(r) \, (1+s^{- n/2}) \, .$$
\end{proposition}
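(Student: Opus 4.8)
The plan is to produce the Sobolev-type inequality (\ref{propultra}.2) on the bounded domain $A=\bar V_r$ with an explicit constant $C_2\le C(n)\,\theta^2(r)$, and then to push it through the equivalences of Proposition \ref{propultra} to land on the (SPI) (\ref{propultra}.4) with $\beta_r(s)=C(n)\,\theta^n(r)\,(1+s^{-n/2})$. The exponent jumps from $2$ to $n$ precisely because, in passing from a Nash inequality to ultracontractivity and thence to a super Poincar\'e inequality, the relevant constant gets raised to the power $n/2$. Throughout we take $r$ large enough that $\bar V_r$ is a bounded domain whose boundary lies in the region $\{|x|\ge R\}$ where \eqref{eqbord} is in force; the low-dimensional cases $n=1,2$ are handled, as usual, by adjoining dummy variables (see \cite{Dav} Theorem 2.4.4) or by working directly with the Nash inequality (\ref{propultra}.3).

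First I would recall, as discussed above, that (\ref{propultra}.2) holds on any bounded domain $A$ carrying a continuous extension operator $E:\W^{1,2}(A)\to\W^{1,2}(\R^n)$, with a constant $C_2$ controlled by $n$ and $\|E\|^2$; this is just the Euclidean Sobolev inequality composed with the bound $\|f\|_{\W^{1,2}(A)}\le\|Ef\|_{\W^{1,2}(\R^n)}$. So the whole question is to build such an $E$ for $A=\bar V_r$ with $\|E\|\le C(n)\,\theta(r)$. For this I would follow Stein's construction in the form of \cite{Dav} Prop.~1.7.9 and \cite{Stein} p.~180--192: it suffices to cover $\partial\bar V_r$ by finitely many balls $B(x_j,\varepsilon_0)$ inside each of which $\partial\bar V_r$ is the graph of a $2$-Lipschitz function, with bounded overlap $N$; then $\|E\|\le C(n)\,(N/\varepsilon_0)$.

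The geometric input is a quantitative implicit function theorem. At $x\in\partial\bar V_r$ choose the coordinate $i_x$ maximizing $|\partial_i V(x)|$; since $|\nabla V(x)|\ge v$ this forces $|\partial_{i_x}V(x)|\ge v/\sqrt n$, and because each partial derivative of $V$ varies by at most $\theta(r)$ per unit displacement on $\bar V_r$, on a ball of radius of order $v/\theta(r)$ around $x$ one still has both $|\partial_{i_x}V|\ge v/(2\sqrt n)$ and $|\partial_j V/\partial_{i_x}V|\le 2$ for every $j$. On such a ball $\bar V_r$ is the subgraph of the implicit function, whose gradient is the vector of these ratios and is therefore $2$-Lipschitz; this yields the uniform choice $\varepsilon_0=c(v,R,n)\,\theta^{-1}(r)$. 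A standard covering lemma then extracts a subcover of $\bar V_r$ by balls of radius $\varepsilon_0/2$ with multiplicity $N\le c^n$ depending only on the dimension. Assembling the pieces gives $\|E\|\le C(n)\,\theta(r)$ and hence $C_2\le C(n)\,\theta^2(r)$; feeding this along the chain of equivalences of Proposition \ref{propultra} --- from (\ref{propultra}.2) to (\ref{propultra}.3) by H\"older interpolation (same constant up to a dimensional factor), then on to (\ref{propultra}.1) and (\ref{propultra}.4) by Nash's iteration and a duality argument, which costs the power $n/2$ --- produces the announced $\beta_r(s)=C(n)\,\theta^n(r)\,(1+s^{-n/2})$.

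The step I expect to be the real obstacle is this quantitative one: making the implicit function theorem effective so that the radius $\varepsilon_0$ on which $\partial\bar V_r$ is a genuine $2$-Lipschitz graph is bounded below by $c(v,R,n)\,\theta^{-1}(r)$. One must control, simultaneously on that whole ball, both the lower bound on $|\nabla V|$ and the bound on the ratio of partial derivatives, and each of these rests on the single second-derivative bound \eqref{eqsecondordre}. Once $\varepsilon_0$ is secured, the rest is bookkeeping --- though the passage along the string of equivalences in Proposition \ref{propultra} must be carried out carefully enough to land exactly on $\theta^n(r)$ and not some other power.
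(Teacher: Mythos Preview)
Your proposal is correct and follows essentially the same approach as the paper: build the Stein extension operator for $\bar V_r$ with $\|E\|\le C(n)(N/\varepsilon_0)$, use the quantitative implicit function theorem driven by \eqref{eqbord} and \eqref{eqsecondordre} to get $\varepsilon_0\ge c(v,R,n)\,\theta^{-1}(r)$ and a dimensional bound on the overlap $N$, deduce $C_2\le C(n)\,\theta^2(r)$ in (\ref{propultra}.2), and then push through the equivalences of Proposition \ref{propultra} to pick up the power $n/2$ and land on $\beta_r(s)=C(n)\,\theta^n(r)(1+s^{-n/2})$. The only cosmetic difference is the route through those equivalences: the paper goes (\ref{propultra}.2) $\to$ defective LSI (\cite{Dav} Th.~2.4.2) $\to$ ultracontractivity $C_1=c(n)C_2^{n/4}$ (\cite{Dav} Cor.~2.2.8) $\to$ (SPI) with $\beta(s)=C_1^2(1+(s/2)^{-n/2})$ (\cite{Dav} Th.~2.4.6), whereas you interpolate through the Nash inequality (\ref{propultra}.3) first; both chains produce the same $\theta^n(r)$.
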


For the computation of $\beta_r$ we used \cite{Dav} lemma 1.7.11 which says that $C_2=c(n)
\parallel E\parallel^2$ and \cite{Dav} proof of theorem 2.4.2 p.77 which yields a logarithmic
Sobolev inequality with $\beta(\varepsilon)= -(n/4) \log \varepsilon + (n/4) \log (C_2 n/4)$
together with \cite{Dav} corollary 2.2.8 which gives $C_1= c(n) \, C_2^{n/4}$. Finally the proof
of \cite{Dav} theorem 2.4.6 gives $\beta(s)= C_1^2 \, (1+(s/2)^{- n/2})$.
\medskip

Proposition \ref{propbord} gives of course the worse result and in many cases one can expect a
much better behavior of $\beta_r$ as a function of $r$. In particular in the homogeneous case we
know the result with a constant independent of $r$.

\begin{remark}
Another possibility to get (SPI) in some domain $A$, is to directly prove the Nash inequality
(\ref{propultra}.3). One possible way to get such a Nash inequality is to prove some
Poincar\'e-Sobolev inequality. The case of euclidean balls is well known.

According to \cite{SCsob} theorem 1.5.2, for $n>2$, with $p=2$ and $s=2n/(n-2)=2^*$ therein, for
all $r>0$ and all ball $B_r$ with radius $r$, if $\lambda_r$ is the Lebesgue measure on $B_r$ and
$\bar f_r = (1/Vol(B_r)) \int_{B_r} \, f dx$ we have
\begin{equation}\label{eqsobpoinc}
\lambda_r\left(|f - \bar f_r|^{\frac{2n}{n-2}}\right)^{\frac{n-2}{2n}} \, \leq \, C_n \,
\lambda_r\left(|\nabla f|^2\right)^{\frac 12} \, ,
\end{equation}
so that using first Minkowski, we have
\begin{equation}\label{eqsobpoincprime}
\lambda_r\left(|f|^{\frac{2n}{n-2}}\right)^{\frac{n-2}{2n}} \, \leq \, C_n \,
\lambda_r\left(|\nabla f|^2\right)^{\frac 12} \, + \, \frac{1}{Vol(B_r)} \, \lambda_r(|f|) \, ,
\end{equation}
and finally using H\"{o}lder inequality and Cauchy-Schwarz inequality we get the local Nash
inequality
\begin{eqnarray}\label{eqnashloclebesgue}
\quad \lambda_r(|f|^2) & \leq & \left(\lambda_r(|f|)\right)^{4/(n+2)} \, \left(C_n \,
\lambda_r\left(|\nabla f|^2\right)^{\frac 12} \, + \, \frac{1}{Vol(B_r)} \,
\lambda_r\left(|f|\right)\right)^{2n/(n+2)} \, ,\\ & \leq & \left(\lambda_r(|f|)\right)^{4/(n+2)}
\, \left(C_n \, \lambda_r\left(|\nabla f|^2\right)^{\frac 12} \, + \, \frac{1}{\sqrt{Vol(B_r)}} \,
\lambda_r\left(|f|^2\right)^{\frac 12} \right)^{2n/(n+2)} \, . \nonumber
\end{eqnarray}
Again, for $r>r_0$ we get a Nash inequality hence a (SPI) inequality independent of $r$ with
$\beta_r(s)=c_n (1+s^{-n/2})$.

Notice that \eqref{eqsobpoinc} is scale invariant, i.e. if it holds for some subset $A$, it holds
for the homotetic $r A$ ($r>0$) with \underline{the same constants}. That is why the constants do
not depend on the radius for balls. If we replace a ball by a convex set, the classical method of
proof using Riesz potentials (see e.g. \cite{SCsob} or \cite{Dav} lemma 1.7.3) yields a similar
results but with an additional constant, namely $diam^n(A)/Vol(A)$, so that if $V$ is a convex
function the constant we obtain with this method in \eqref{eqsobpoinc} for $\bar V_r$ may depend
on $r$.
\smallskip

Actually the Sobolev-Poincar\'e inequality \eqref{eqsobpoinc} extends to any John domain with a
constant $C$ depending on the dimension $n$ and on the John constant of the domain. This result is
due to Bojarski \cite{Boj} (also see \cite{Haj} for another proof and \cite{BK96} for a converse
statement). Actually a John domain satisfies some chaining (by cubes or balls) condition which is
the key for the result (see the quoted papers for the definition of a John domain and the chaining
condition). But an explicit calculation of the John constant is not easy. \hfill $\diamondsuit$
\end{remark}
\medskip

\subsection{Typical Lyapunov functions and applications.}\label{subseclyapchoice}

We here specify classes of natural Lyapunov function: function of the potential or of the
distance. As will be seen, it gives new practical conditions for super-Poincar\'e inequality and
for logarithmic Sobolev inequality.

First, since $W \geq 1$ we may write $W=e^U$ so that condition (L) becomes
\begin{equation}\label{eqcond}
\Delta U + |\nabla U|^2 - \nabla U . \nabla V + \phi \, \leq \, 0 \quad \textrm{ ``at infinity''.}
\end{equation}
\medskip

\subsubsection{Lyapunov function $e^{aV}$}\label{seclyapchoice}

Test functions $e^{aV}$ for $a<1$ are quite natural in that they are the limiting case  for the
spectral gap (see \cite{BCG}).  Indeed, $\mu(e^{aV})$ is finite if and only if $a<1$ and a drift
condition such that
$$\LL W\le -\lambda W+b 1_C$$
formally implies by integration by $\mu$, that $\mu(W)$ is finite. So in a sense, $e^{aV}$ are the
``largest'' possible  Lyapunov functions.

Hence, if $W=e^{aV}$, $\frac{\LL W}{W} = a \, \left(\Delta V-(1-a)|\nabla V|^2\right)$. Introduce
the following conditions
\begin{enumerate}\stepcounter{equation}\eqlabel{\theequation}\label{condcat}
\item[(\theequation.1)] $V(x) \to +\infty$ as $|x| \to +\infty$, \item[(\theequation.2)] there
exist $0<a_0<1$, a non-decreasing function $\eta$ with $\eta(u) \to +\infty$ as $u \to +\infty$
and a constant $b_0$ such that $$(1-a_0) |\nabla V|^2 - \Delta V \geq \eta(V) + b_0 \,
\BBone_{|x|<R}\, ,$$ \item[(\theequation.3)] $\limsup_{|x| \to +\infty} \left(\eta(V(x))/|\nabla
V(x)|^2\right) < +\infty$.
\end{enumerate}

Then for $0<a<a_0$ condition (L) is satisfied with $$\phi=a(a_0-a) |\nabla V|^2 + a \eta(V) \, .$$
In addition $\inf_{(\bar V_r)^c} \, \phi(V) \leq c \, \sup_{\bar V_{r+2}} |\nabla V|^2$.

Following remark \ref{remlevel} (we choose arbitrarily  $\varepsilon=1/2$ here) we obtain for some
constant $c$
\begin{equation}\label{eqcat1}
 \int \, f^2 d\mu \leq \, s \, \int |\nabla f|^2 d\mu + c \, \left(1 + \sup_{\bar
V_{2+\eta^{-1}(c/s)}} |\nabla V|^2\right)^{n/2} \, e^{\eta^{-1}(c/s)} \, \left(\int |f|
d\mu\right)^2 \, .
\end{equation}
 We thus clearly see that to get an explicit (SPI) we need to control the
gradient  $\nabla V$ on the level sets of $V$.
\smallskip

If instead of using theorem \ref{thmmain}.(1) we want to use theorem \ref{thmmain}.(2) or more
precisely theorem \ref{thmmainbis} we have to use proposition \ref{propbord}. Hence since
\eqref{eqbord} is satisfied we obtain for $s$ small enough
\begin{equation}\label{eqcat2}
 \int \, f^2 d\mu \leq \, s \, \int |\nabla f|^2 d\mu + C \,
\theta^n(\eta^{-1}(c/s)) \, e^{2 \eta^{-1}(c/s)} \, \left(1 + s^{-n/2} e^{n
\eta^{-1}(c/s)/2}\right) \, \left(\int |f| d\mu\right)^2 \, .
\end{equation}
We have obtained

\begin{theorem}\label{propcat}
Assume that (\ref{condcat}.1), (\ref{condcat}.2), (\ref{condcat}.3) are satisfied. Then $\mu$ will
satisfy a (SPI) inequality with function $\beta$ in one of the following cases
\begin{enumerate}
\item[(\ref{propcat}.1)] \quad for $|x|$ large enough, $|\nabla V|(x) \leq \gamma (V(x))$ and
$\beta(s)= C (1 + e^{\eta^{-1}(c/s)} \, \gamma^n(\eta^{-1}(c/s)))$, \item[]
\item[(\ref{propcat}.2)] \quad for $|x|$ large enough $\left|\frac{\partial^2 V}{\partial x_i \,
\partial x_j}(x)\right| \leq \theta(V(x))$ and $$\beta(s)= C \left(1 + \theta^n(\eta^{-1}(c/s))
 \, s^{-n/2} e^{(n+4) \eta^{-1}(c/s)/2}\right) \, .$$
\end{enumerate}
\end{theorem}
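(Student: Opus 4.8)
The plan is to derive Theorem~\ref{propcat} as an immediate consequence of the computations already carried out in Section~\ref{seclyapchoice}, by combining the choice $W=e^{aV}$ with the two versions of Theorem~\ref{thmmain}. First I would record the basic identity $\frac{\LL W}{W}=a(\Delta V-(1-a)|\nabla V|^2)$ for $W=e^{aV}$ with $0<a<a_0$, and observe that assumptions (\ref{condcat}.1)--(\ref{condcat}.3) guarantee that condition~(L) holds with $\phi=a(a_0-a)|\nabla V|^2+a\eta(V)$ and that $\phi(x)\to\infty$ as $|x|\to\infty$ (this uses (\ref{condcat}.1) together with the fact that $\eta$ is non-decreasing and tends to infinity, so $\eta(V)\to\infty$). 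The Lyapunov hypothesis of Theorem~\ref{thmmain} is therefore in force; it remains only to feed in the appropriate local (SPI) for the Lebesgue measure and to track how the resulting function $\alpha$ simplifies under the extra pointwise bounds in cases (\ref{propcat}.1) and (\ref{propcat}.2).

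For part (\ref{propcat}.1), I would invoke the version of Theorem~\ref{thmmain} as reformulated through level sets in Remark~\ref{remlevel}, so that $A_r=\bar V_r$, $\Phi(r)=\inf_{(\bar V_r)^c}\phi$, $g(r)=r+2$ (here $T=0$, so $V-T=V$) and $G(r)=\sup_{\bar V_{r+2}}|\nabla V|^2$. Since $\Phi(r)\geq a\,\eta(r)$ by the definition of $\phi$ and monotonicity of $\eta$, we have $\Phi^{-1}(t)\leq \eta^{-1}(ct)$ for a suitable constant $c$; plugging this into formula~(1) of Theorem~\ref{thmmain}, fixing $\varepsilon=1/2$ and using the local (SPI) $\beta(s)=c_n(1+s^{-n/2})$ for the Lebesgue measure (Proposition~\ref{propultra}), one arrives exactly at \eqref{eqcat1}. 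Finally the hypothesis $|\nabla V|(x)\leq\gamma(V(x))$ for large $|x|$ lets us replace $\sup_{\bar V_{2+\eta^{-1}(c/s)}}|\nabla V|^2$ by $\gamma^2(\eta^{-1}(c/s))$ up to adjusting constants, which gives the stated $\beta(s)=C(1+e^{\eta^{-1}(c/s)}\gamma^n(\eta^{-1}(c/s)))$. (One also needs to note that the condition $sG(r)\le 2(1-\varepsilon)$ imposed in the proof of Theorem~\ref{thmmain} is automatically compatible with the choices of $r$ and $s$ made here for $s$ small, which is exactly the content of the remark ``$\inf_{(\bar V_r)^c}\phi\le c\sup_{\bar V_{r+2}}|\nabla V|^2$'' recorded just above \eqref{eqcat1}.)

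For part (\ref{propcat}.2) I would instead use version~(2) of Theorem~\ref{thmmain}, more precisely the localized variant Theorem~\ref{thmmainbis}, so that the local (SPI) on $A_r=\bar V_r$ must be supplied by Proposition~\ref{propbord}: under \eqref{eqbord} (which follows from (\ref{condcat}.2)--(\ref{condcat}.3), since these force $|\nabla V|$ to be bounded below at infinity) the constant is $\beta_r(s)=C(n)\theta^n(r)(1+s^{-n/2})$, with $\theta$ the sup of second derivatives over $\partial\bar V_r$. Substituting $r=\Phi^{-1}(\tfrac4s\vee\tfrac{bs}2)\le\eta^{-1}(c/s)$ into the case-(2) formula $\alpha(s)=2e^{2H(\cdot)}\beta_{r(s)}(\tfrac s8 e^{-H(\cdot)})$, with $H(r)=\mathrm{Osc}_{\bar V_{r+2}}(V)\le r+2\le c\,\eta^{-1}(c/s)$ (again $T=0$), yields precisely \eqref{eqcat2}, and then the pointwise bound $|\partial^2_{ij}V(x)|\le\theta(V(x))$ lets us replace $\theta^n(r(s))$ by $\theta^n(\eta^{-1}(c/s))$, giving the announced $\beta(s)=C(1+\theta^n(\eta^{-1}(c/s))s^{-n/2}e^{(n+4)\eta^{-1}(c/s)/2})$.

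The argument is essentially bookkeeping once the two input ingredients are identified; the only genuinely delicate point — and where I expect the main obstacle — is justifying that the level sets $\bar V_r$ are admissible in the way the theorems require: one must check that $|V|(x)\to\infty$ (so that $\bar V_r\uparrow M$ and the family is exhausting), that $\bar V_r$ is a bounded smooth-enough domain so that Proposition~\ref{propbord}'s extension/covering argument applies (this is where \eqref{eqbord}, i.e. the non-vanishing of $\nabla V$ at infinity, enters, guaranteeing $\partial\bar V_r$ is a manifold for large $r$), and that the enlargement $\bar V^{r+2}$ appearing in Remark~\ref{remlevel} is controlled by $\bar V_{r+2}$ up to the Lipschitz-constant corrections already discussed there. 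Handling these regularity and exhaustion issues — rather than any new inequality — is the substantive part of the proof; everything else is substitution into formulas~(1) and~(2) of Theorem~\ref{thmmain}.
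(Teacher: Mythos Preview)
Your proposal is correct and follows essentially the same route as the paper: the paper's ``proof'' is precisely the discussion preceding the statement, deriving \eqref{eqcat1} from Remark~\ref{remlevel} (i.e.\ case~(1) of Theorem~\ref{thmmain} on level sets, with $\varepsilon=1/2$) and \eqref{eqcat2} from case~(2) via Theorem~\ref{thmmainbis} together with Proposition~\ref{propbord}, then reading off the announced $\beta$'s under the respective pointwise bounds on $|\nabla V|$ and on the second derivatives. Your identification of the roles of (\ref{condcat}.3) --- both for the compatibility remark above \eqref{eqcat1} and for securing \eqref{eqbord} --- matches the paper's use, and your closing paragraph about the regularity/exhaustion of the level sets is a fair summary of what is being taken for granted.
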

\smallskip

\begin{remark}\label{remcat}
If $\eta(u)=u$ we thus obtain that $\mu$ satisfies a (defective) logarithmic Sobolev inequality
provided either $\gamma(u) \leq e^{Ku}$ or $\theta(u) \leq e^{Ku}$. But (\ref{condcat}.1) and
(\ref{condcat}.2) imply that $\mu$ satisfies a Poincar\'e inequality (see e.g. \cite{BCG}
corollary 4.1). Hence using Rothaus lemma we get that $\mu$ satisfies a (tight) logarithmic
Sobolev inequality.

Conditions (\ref{condcat}.1) and (\ref{condcat}.2), with $\eta(u)=u$, appear in \cite{KS85} where
the authors show that they imply the hypercontractivity of the associated symmetric semi-group,
hence a logarithmic Sobolev inequality by using Gross theorem. In particular the additional
assumptions on the first or the second derivatives do not seem to be useful. Another approach
using Girsanov transformation was proposed in \cite{cat5} for $a_0=1/2$, again without the
technical assumptions on the derivatives. This approach extends to more general processes with a
``carr\'e du champ''.

Here we directly get the logarithmic Sobolev inequality without using Gross theorem, but with some
conditions on $V$.
\smallskip

The advantage of theorem \ref{propcat} is that it furnishes an unified approach of various
inequalities of $F$-Sobolev type. In \cite{BCR1} conditions (\ref{condcat}.1) and
(\ref{condcat}.2) are used (for particular $\eta$'s) to get the Orlicz-hypercontractivity of the
semi-group hence a $F$-Sobolev inequality thanks to the Gross-Orlicz theorem proved therein. The
use of this theorem requires some quite stringent conditions on $\eta$ but covers the case
$\eta(u)=u^{\alpha}$ for $1<\alpha<2$, yielding a $F$-Sobolev inequality for
$F(u)=\log_+^\alpha(u)$ (more general $F$ are also studied in \cite{BCR3} section 7). Note that in
theorem \ref{propcat} we do no more need the restriction $\alpha<2$, but we need some control on
the growth on $\gamma$ or $\theta$, namely we need again $\gamma(u) \leq e^{K u}$ (the same for
$\theta$).

We also obtain a larger class of $F$-Sobolev inequalities thanks to the correspondence between
$F$-Sobolev and (SPI) recalled in the introduction. The reader is referred to \cite{Wbook} section
5.7 for related results in the ultracontractive case. \hfill $\diamondsuit$
\end{remark}

\medskip

\subsubsection{Lyapunov function $e^{a|x|^b}$}\label{secdistchoice}

If we try to use $W=e^{a|x|^b}$ we are led to choose $$\phi(x) = ab |x|^{b-2} \psi(x)$$ with
$$\psi(x) = x.\nabla V - \left(n +(b-2) + ab |x|^{b}\right)$$ provided the latter quantities
are bounded from below by a positive constant for $|x|$ large enough.

Introduce now the standard curvature assumption
\begin{equation}\label{eqcurvature}
\textrm{ for all $x$, } \quad Hess V (x) \geq c_0 \, Id
\end{equation}
for some $\rho \in \R$. This assumption allows to get some control on $x.\nabla V$ namely
\begin{lemma}\label{lemcurvature}
If \eqref{eqcurvature} holds, $x.\nabla V (x) \geq V(x) - V(0) + c_0 \, |x|^2/2$.
\end{lemma}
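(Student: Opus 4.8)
The statement to prove is Lemma \ref{lemcurvature}: under the curvature bound $\Hess V(x) \geq c_0 \, Id$, one has $x \cdot \nabla V(x) \geq V(x) - V(0) + c_0\, |x|^2/2$. The natural approach is to integrate the gradient along the segment joining $0$ to $x$ and use convexity (in the $c_0$-shifted sense) of $V$ along rays. Concretely, I would set $h(t) := V(tx)$ for $t \in [0,1]$, so that $h'(t) = x \cdot \nabla V(tx)$ and $h''(t) = \langle \Hess V(tx)\, x, x\rangle \geq c_0\, |x|^2$ by \eqref{eqcurvature}. Thus $h'$ is ``$c_0|x|^2$-convex'': $h'(1) - h'(t) = \int_t^1 h''(u)\, du \geq c_0\, |x|^2 (1-t)$.

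**Key steps.** First, write $x \cdot \nabla V(x) = h'(1)$. Second, from the inequality $h'(1) \geq h'(t) + c_0\,|x|^2(1-t)$ for every $t \in [0,1]$, integrate both sides over $t \in [0,1]$: the left side gives $h'(1)$, the integral of $h'(t)$ gives $h(1) - h(0) = V(x) - V(0)$ by the fundamental theorem of calculus, and $\int_0^1 c_0\,|x|^2(1-t)\, dt = c_0\,|x|^2/2$. This yields exactly
\[
x \cdot \nabla V(x) \;\geq\; V(x) - V(0) + \frac{c_0\, |x|^2}{2},
\]
which is the claim. One should note that the hypothesis is stated as $\Hess V \geq c_0\, Id$ globally (the phrase ``for some $\rho \in \R$'' in \eqref{eqcurvature} appears to be a leftover typo for $c_0$), so the computation is valid on the whole ray and no localization is needed.

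**Main obstacle.** There is essentially no deep obstacle here; the lemma is a one-line consequence of integrating the curvature bound along rays. The only mild technical point is a regularity issue: $V$ is only assumed $W^{1,2}_{\mathrm{loc}}$ in the ambient setting, so strictly speaking $h''$ need not exist pointwise and the Hessian bound must be read in the distributional sense. This is handled routinely by a mollification argument — approximate $V$ by smooth $V_\varepsilon$ with $\Hess V_\varepsilon \geq (c_0 - o(1)) Id$ on compacts, apply the smooth computation, and pass to the limit — or simply by working under the implicit assumption (consistent with condition \eqref{eqcurvature} being imposed pointwise) that $V$ is $C^2$. I would present the clean $C^2$ computation and remark that the general case follows by approximation.
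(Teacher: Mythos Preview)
Your proof is correct and follows essentially the same route as the paper: both arguments integrate the bound $h''(t)=\langle \Hess V(tx)\,x,x\rangle\ge c_0|x|^2$ along the segment $t\mapsto tx$, the only cosmetic difference being that the paper packages the computation via the auxiliary function $g(t)=t\,x\cdot\nabla V(tx)=t\,h'(t)$ and integrates $g'(t)\ge h'(t)+c_0 t|x|^2$ once, whereas you integrate $h''$ twice. Your remark on regularity is a reasonable addition that the paper does not make explicit.
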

\begin{proof}
Introduce the function $g(t)=t \, x.\nabla V(tx)$ defined for $t\in [0,1]$. \eqref{eqcurvature}
implies that $g'(t) \geq x.\nabla V(tx) + tc_0 |x|^2$ and the result follows by integrating the
latter inequality between $0$ and $1$.
\end{proof}
\smallskip

We may thus state
\begin{proposition}\label{propcurvature}
Assume that \eqref{eqcurvature} is satisfied. Then one can find positive constants $c,C$ such that
$\mu$ satisfies some (SPI) with function $\beta$ (given below for $s$ small enough) in the
following cases
\begin{enumerate}
\item[(\ref{propcurvature}.1)] \quad $c_0 \geq 0$,  $V(x) \geq c' |x|^b$ for $|x|$ large enough
some $c'>0$ and $b>1$, $$\beta(s)= C \, e^{c (1/s)^{\frac{b}{2\left((b-1)\wedge 1\right)}}} \, .$$
\item[(\ref{propcurvature}.2)] \quad $c_0 \geq 0$,  $d' |x|^{b'} \geq V(x) \geq c' |x|^b$ for
$|x|$ large enough some $d', c'>0$ and $b' \geq b > 1$, $$\beta(s)= C \, e^{c
(1/s)^{\frac{b'}{b'+b -2}}} \, .$$ \item[(\ref{propcurvature}.3)] \quad $c_0 \leq 0$, for $|x|$
large enough, $V(x) \geq (\varepsilon - c_0/2)  |x|^2$ for some $\varepsilon
>0$, and $$\beta(s)= C \, e^{c (1/s)} \, .$$
\item[(\ref{propcurvature}.4)]  \quad $c_0 \leq 0$, for $|x|$ large enough, $d' |x|^{b'} \geq
V(x) \geq c' |x|^b$ and $\beta$ as in (\ref{propcurvature}.2).
\end{enumerate}
\end{proposition}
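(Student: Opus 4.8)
The plan is to apply Theorem~\ref{propcat} after choosing the Lyapunov function $W=e^{a|x|^b}$ (with a suitable exponent $b$ matching the growth of $V$) and computing the associated function $\phi$, $\eta$, and the quantities $\gamma$ or $\theta$ entering into $\beta$. First I would use Lemma~\ref{lemcurvature}: since $\Hess V\geq c_0\,Id$, we have $x.\nabla V(x)\geq V(x)-V(0)+c_0|x|^2/2$. Plugging this into the expression $\psi(x)=x.\nabla V-\left(n+(b-2)+ab|x|^b\right)$ from Section~\ref{secdistchoice}, and using the hypothesis $V(x)\geq c'|x|^b$, shows that for $a$ small enough $\psi(x)\geq \tfrac12 V(x)\geq \tfrac{c'}{2}|x|^b$ for $|x|$ large, so that condition (L) holds with $\phi(x)=ab|x|^{b-2}\psi(x)\gtrsim |x|^{2b-2}$ when $c_0\geq0$. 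In the case $c_0<0$ one needs the stronger lower bound $V(x)\geq(\varepsilon-c_0/2)|x|^2$ precisely so that the negative term $c_0|x|^2/2$ from Lemma~\ref{lemcurvature} is absorbed, giving $\phi(x)\gtrsim|x|^b$ after the analogous computation with $b=2$ in the relevant place.

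Next I would translate the growth of $\phi$ into the function $\eta$ of Theorem~\ref{propcat}. Writing $\phi=\eta(V)$ and using $V\asymp |x|^b$ (lower bound alone in cases (1),(3); two-sided bound $c'|x|^b\leq V\leq d'|x|^{b'}$ in cases (2),(4)), one gets $\eta(u)\asymp u^{(2b-2)/b}$ when $c_0\geq0$ with a one-sided bound, but this can be sharpened to $\eta(u)\asymp u^{(2b-2)/b'}$ — hence $\eta^{-1}(v)\asymp v^{b'/(2b-2)}$ — under the two-sided bound, which is why case (2) gives the better exponent $b'/(b'+b-2)$. For $c_0<0$ the computation gives $\eta(u)\asymp u$, hence $\eta^{-1}(v)\asymp v$, producing the exponent $1$ in case (3). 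Then I would invoke Theorem~\ref{propcat}.(\ref{propcat}.1), estimating $\gamma(u)=\sup_{\{V\leq u\}}|\nabla V|$: the curvature lower bound together with $|\nabla V(x)|\leq |\nabla V(0)|+\text{(linear in }|x|)$ gives $\gamma(u)\leq C(1+|x|)\leq C'(1+u^{1/b})$, so $e^{\eta^{-1}(c/s)}\gamma^n(\eta^{-1}(c/s))$ is dominated by $e^{c'(1/s)^{b'/(2b-2)}}$ (resp.\ the other stated exponents), the polynomial factor being absorbed into the exponential. Matching exponents carefully — in particular checking $b/(2((b-1)\wedge1))$ in case (1), which comes from $\eta(u)\asymp u^{(2b-2)/b}$ for $b\geq2$ but from a different (degenerate) computation $\eta(u)\asymp u^{(2(b-1))/b}$ with the $(b-1)\wedge1$ reflecting that for $1<b<2$ the term $x.\nabla V$ rather than $|x|^b$ controls $\psi$ — yields the four stated forms of $\beta$.

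The main obstacle, I expect, is the careful bookkeeping of exponents in each of the four regimes, especially disentangling which lower bound ($V\geq c'|x|^b$ versus the two-sided bound) is driving $\eta$ and which term ($x.\nabla V$ via Lemma~\ref{lemcurvature} versus the explicit $ab|x|^b$ subtracted in $\psi$) dominates $\psi$ at infinity; the case $1<b<2$ with $c_0\geq0$ (the $(b-1)\wedge1$ in case (1)) is the delicate one since there $|x|^b=o(|x|^2)$ and one cannot simply use the curvature quadratic term, so one must keep the genuine term $V(x)-V(0)$ from Lemma~\ref{lemcurvature}. The verification that $\phi(x)\to\infty$ and in fact $\phi\gtrsim$ a positive power of $|x|$ (needed so that $\Phi^{-1}$ is well controlled and so that $\phi$ is bounded below on a neighborhood of infinity after subtracting the constant $n+(b-2)$) is routine once the right $a$ is fixed, as is the appeal to \eqref{eqbord} (automatic here since $|\nabla V|$ grows at least like $|x|^{b-1}$ under the curvature bound, or must be added as an implicit consequence of the hypotheses in the borderline cases). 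Everything else is a direct application of Theorem~\ref{propcat}, so I would keep the proof short, essentially reducing each case to the single inequality $\phi=\eta(V)$ with the claimed $\eta$ and then quoting Theorem~\ref{propcat}.(\ref{propcat}.1).
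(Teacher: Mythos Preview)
Your plan has a genuine gap: you cannot route the argument through Theorem~\ref{propcat}. That theorem requires either the gradient bound $|\nabla V|\le \gamma(V)$ (case~(\ref{propcat}.1)) or the Hessian bound $|\partial^2_{ij}V|\le \theta(V)$ (case~(\ref{propcat}.2)), and neither is available here. Your claim that ``the curvature lower bound together with $|\nabla V(x)|\leq |\nabla V(0)|+\text{(linear in }|x|)$'' is simply false: a \emph{lower} bound $\Hess V\ge c_0\,Id$ gives no upper bound whatsoever on $|\nabla V|$ (take $V(x)=|x|^4$ in $\R$ with $c_0=0$). Likewise, no upper bound on second derivatives follows. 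So even though your computation of $\phi$ from $W=e^{a|x|^b}$ and Lemma~\ref{lemcurvature} is correct, the last step---controlling $\gamma$ or $\theta$---fails, and with it the whole invocation of Theorem~\ref{propcat}.

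The paper's proof avoids this entirely by exploiting what the curvature hypothesis \emph{does} give: convexity of the level sets. When $c_0\ge 0$ the sets $\bar V_r$ are convex, so by the discussion in Section~\ref{subsecnash} they support a local Nash inequality with $\beta_r(s)=c(1+s^{-n/2})$ \emph{independent of $r$}. One then applies Theorem~\ref{thmmainbis} in the form of Theorem~\ref{thmmain}~(2), which needs only the oscillation $H(r)=\text{Osc}_{\bar V_r}V\le r$, not any gradient bound. When $c_0<0$ (cases (\ref{propcurvature}.3)--(\ref{propcurvature}.4)) the level sets of $V$ need not be convex; the paper instead uses the level sets $\bar H_r$ of $H(x)=V(x)+\tfrac{c_0}{2}|x|^2$, which \emph{are} convex, and the growth condition on $V$ ensures $V\asymp r$ on $\bar H_r$. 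You do not mention this substitution at all. In short, the missing idea is ``convexity of level sets $\Rightarrow$ uniform local Nash $\Rightarrow$ Theorem~\ref{thmmainbis}~(2)'', which replaces the unavailable control of $\nabla V$.
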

\begin{proof}
In all the proof $D$ will be an arbitrary positive constant whose value may change from place to
place. All the calculations are assuming that $|x|$ is large enough.

Consider first case 1. Choosing $a$ small enough and using lemma \ref{lemcurvature}, we see that
$\phi(x) \geq D \, |x|^{b-2} \, V(x)$. If $b\geq 2$ we thus have $\phi(x) \geq D \, V(x)$ while
for $b<2$, $\phi(x) \geq D \, V^{\frac{2(b-1)}{b}}(x)$ for large $|x|$ according to the
hypothesis. For $\phi$ to go to infinity at infinity, $b>1$ is required. In particular on the
level sets $\bar V_r$ we have either $\phi(x) \geq D \, r$ or $\phi(x) \geq D \, r^{2(b-1)/b}$.

Now since the level sets $\bar V_r$ are convex, we know that some Nash inequality holds on $\bar
V_r$ according to the discussion in the previous subsection. We may thus use theorem
\ref{thmmainbis} in the situation (2) of theorem \ref{thmmain}. Choosing $s= d / r$ or $s=
d/r^{2(b-1)}{b}$ for some well chosen $d$ yields the result with an extra factor $s^{-k}$ for some
$k>0$. This extra term can be skipped just changing the constants in the exponential term.
\smallskip

Case 2 is similar but improving the lower bound for $\phi$. Indeed since $D |x| \geq  \,
V^{1/b'}(x)$, $\phi(x) \geq D \, V^{\frac{b'+b-2}{b'}}(x)$. It allows us to improve $\beta$.
\smallskip

Let us now consider Case 3. Since $b=2$, our hypothesis implies that for $2a < \varepsilon$, $\phi
\geq D V$. But the curvature assumption implies that the level sets of $x \mapsto H(x)= V(x)+c_0
|x|^2/2$ are convex. Since $V(x) \geq D |x|^2$, one has $c r \leq V(x) \leq r$ if $x \in \bar H_r$.
We may thus mimic case 1, just replacing $\bar V_r$ by $\bar H_r$. Case 4 is similar to the
previous one just improving the bound on $\phi$ as in case 2.
\end{proof}
\smallskip

\begin{corollary}\label{corcurvature}
\begin{enumerate}
\item[(1)] \quad If (\ref{propcurvature}.3) holds, $\mu$ satisfies a logarithmic Sobolev
inequality. \item[(2)] \quad If (\ref{propcurvature}.1) holds with $b=2$, $\mu$ satisfies a
logarithmic Sobolev inequality. In particular if $\rho
> 0$, $\mu$ satisfies a logarithmic Sobolev inequality (Bakry-Emery criterion). \item [(3)] \quad
If (\ref{propcurvature}.1) holds for some $1<b<2$, $\mu$ satisfies a $F$-Sobolev inequality with
$F(u)=\log_+^{2(1-(1/b))}(u)$.
\end{enumerate}
\end{corollary}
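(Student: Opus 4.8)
The plan is to derive each of the three statements in Corollary \ref{corcurvature} from Proposition \ref{propcurvature} by reading off the growth of the function $\beta$ and invoking the correspondence between (SPI) and $F$-Sobolev inequalities recalled in the introduction (see \cite{Wbook} Theorems 3.3.1 and 3.3.3). The key observation throughout is that, as noted right after \eqref{FSob}, a defective (LSI) is exactly equivalent to a (SPI) with $\beta(s) = c\,e^{c'/s}$, and more generally a $F$-Sobolev inequality with $F(u) = \log_+^\alpha(u)$ corresponds to a (SPI) with $\beta(s)$ of order $\exp(c\,s^{-1/\alpha})$ (up to polynomial corrections that can be absorbed in the exponential, exactly as is done inside the proof of Proposition \ref{propcurvature}).

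For item (1), I would apply case (\ref{propcurvature}.3) of Proposition \ref{propcurvature}: under the stated hypotheses $\mu$ satisfies (SPI) with $\beta(s) = C\,e^{c/s}$, which is precisely the form equivalent to a defective logarithmic Sobolev inequality. To get a \emph{tight} (LSI) one then notes (as in Remark \ref{remcat}) that the hypotheses of (\ref{propcurvature}.3) — in particular $V(x) \geq (\vep - c_0/2)|x|^2$ together with the curvature bound — force a Poincar\'e inequality to hold for $\mu$ (e.g. via \cite{BCG} corollary 4.1, or directly since $H(x) = V(x) + c_0|x|^2/2$ is uniformly convex at infinity), and then Rothaus's lemma upgrades the defective (LSI) to a tight one. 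For item (2), case (\ref{propcurvature}.1) with $b = 2$ gives $\beta(s) = C\,e^{c(1/s)^{1}}$ since then $\tfrac{b}{2((b-1)\wedge 1)} = 1$; this is again the (LSI)-type rate, and the same Rothaus argument applies. The special case $c_0 > 0$ (written $\rho > 0$ in the statement) is then automatic because uniform convexity of $V$ forces $V(x) \geq c'|x|^2$ at infinity, so the hypotheses of (\ref{propcurvature}.1) with $b=2$ are met and one recovers the Bakry--Emery criterion.

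For item (3), I would again invoke case (\ref{propcurvature}.1), now for $1 < b < 2$, which yields $\beta(s) = C\,e^{c(1/s)^{\gamma}}$ with exponent $\gamma = \tfrac{b}{2((b-1)\wedge 1)} = \tfrac{b}{2(b-1)}$ (using $b-1 < 1$ here). Setting $\alpha := 1/\gamma = \tfrac{2(b-1)}{b} = 2(1 - 1/b)$, the rate is $\beta(s) \asymp \exp(c\,s^{-1/\alpha})$, which by the (SPI)$\leftrightarrow$($F$-Sob) correspondence corresponds to a $F$-Sobolev inequality with $F(u) = \log_+^{\alpha}(u) = \log_+^{2(1-1/b)}(u)$, as claimed. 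Concretely one computes $\xi(t) = \sup_{u>0}(\tfrac1u - \tfrac{\beta(u)}{ut})$ for $\beta(u) = C e^{cu^{-1/\alpha}}$ and checks that $\xi(t)$ grows like $\log^{\alpha}(t)$, whence $F(u) = \tfrac{C_1}{u}\int_0^u \xi(t/2)\,dt - C_2$ also grows like $\log^{\alpha}(u)$.

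The main obstacle, and the only place requiring genuine care rather than bookkeeping, is the verification in item (1) (and the tight part of item (2)) that a Poincar\'e inequality is actually available so that Rothaus's lemma applies — i.e. checking that the curvature-plus-growth hypotheses of (\ref{propcurvature}.3), respectively of (\ref{propcurvature}.1) with $b=2$, imply the Lyapunov condition for a spectral gap in the sense of \cite{BCG}. The rest is a matter of matching the exponent $\tfrac{b}{2((b-1)\wedge 1)}$ in Proposition \ref{propcurvature} against the exponent $1/\alpha$ in the (SPI)$\leftrightarrow$($F$-Sob) dictionary, and observing that the polynomial prefactors in $\beta$ (the $s^{-n/2}$ and $s^{-k}$ terms) do not affect the value of $\alpha$ since they are dominated by the exponential.
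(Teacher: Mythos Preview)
Your proposal is correct and matches the paper's implicit argument: the paper gives no separate proof of this corollary, treating it as an immediate consequence of Proposition~\ref{propcurvature} together with the (SPI)$\leftrightarrow$($F$-Sob) dictionary recalled in the introduction, and the Rothaus tightening already spelled out in Remark~\ref{remcat}. Your identification of the Poincar\'e step as the only genuine check is accurate, and your route to it (the Lyapunov condition established in the proof of Proposition~\ref{propcurvature} feeds directly into \cite{BCG}) is exactly the mechanism the paper relies on.
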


The first statement of the theorem is reminiscent to Wang's improvement of the Bakry-Emery
criterion, namely if $\int \, \int \, e^{(-\rho + \varepsilon)|x-y|^2} \, \mu(dx) \, \mu(dy) <
+\infty$, $\mu$ satisfies a logarithmic Sobolev inequality. Our statement is weaker since we are
assuming some uniform behavior. The third statement can thus be seen as an extension of Wang's
result to the case of $F$-Sobolev inequalities interpolating between Poincar\'e inequality and
log-Sobolev inequality. These inequalities are related to the Latala-Oleskiewicz interpolating
inequalities \cite{LO00}, see \cite{BCR1} for a complete description.

It should be interesting to improve (3) in the spirit of Wang's concentration result. See
\cite{Kol,BK} for a tentative involving modified log-Sobolev inequalities introduced in \cite{GGM1} and mass transport.

\bigskip

\section{The general manifold case}

In fact as one guesses, the main point is to get the additional Super Poincar\'e inequality, local
as developed in Section \ref{subsecnash}, or global (and then using the localization technique
already mentioned). It is of course a fundamental field of research which encompasses the scope of
the present paper. We may however use our main results Theorem \ref{thmmain} and Theorem
\ref{thmmainbis}, with the same Lyapunov functionals as developed in Sections \ref{seclyapchoice}
and \ref{secdistchoice}, replacing of course the euclidean distance by the Riemannian distance
(w.r.t a fixed point), at least in two main cases.

According to \cite{Croke}, if the injectivity radius of $M$ is positive then (\ref{SP}) holds for
$T=0$ and $\beta(s)= c_1 +c_2 s^{-d/2}$ for some constants $c_1, c_2>0$; if in particular  the
injectivity is infinite, then one may take $c_1=0$, \cite{w00} page 225.

 Next, if the Ricci
curvature of $M$ is bounded below, then by \cite{w00} Theorem 7.1,
there exists $c_1, c_2>0$ such that (\ref{SP}) holds for $T=
c_1\rho$ and $\beta(s)= c_2s^{-d/2}.$ For simplicity, throughout
this section we assume that

\ \newline $(H)$\ \emph{The injectivity radius of $M$ is positive.}
\smallskip

\subsection{Lyapunov condition $e^{aV}$}

 In this context, one may readily generalizes the result of Theorem (\ref{propcat}) for the
 first case (\ref{propcat}.1), with the euclidean distance replaced by the Riemannian one, assuming
 (\ref{condcat}.1),  (\ref{condcat}.2) and (\ref{condcat}.3).

 \begin{theorem}\label{propcatm}
Assume $(H)$ and that (\ref{condcat}.1), (\ref{condcat}.2), (\ref{condcat}.3) are satisfied.
 Suppose moreover that for large $\rho$, $|\nabla V|(x) \leq \gamma (V(x))$.  Then $\mu$ will
satisfy a (SPI) inequality with function $\beta$ given by
$$\beta(s)= C (1 + e^{\eta^{-1}(c/s)} \, \gamma^n(\eta^{-1}(c/s))).$$
\end{theorem}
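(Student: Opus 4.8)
The plan is to apply Theorem \ref{thmmain} (or rather its refinement, Theorem \ref{thmmainbis}) in the setting of the manifold $M$ with the Riemannian distance $\rho$ in place of the Euclidean norm. First I would observe that by hypothesis $(H)$ and the result of \cite{Croke} quoted just above, the reference measure $d\lambda = dx$ (i.e.\ $T=0$) satisfies a global (SPI) with rate function $\beta_0(s) = c_1 + c_2 s^{-d/2}$, and hence so does any of its restrictions to the relevant enlarged sets; this furnishes the ``local (SPI)'' ingredient required by the theorems. Here, crucially, since $T=0$ we have $V - T = V$, so the quantities $g(r) = \sup_{\rho(\cdot,A_r)\le 2} |V|$ and $G(r) = \sup_{\rho(\cdot,A_r)\le 2}|\nabla V|^2$ appearing in case (1) of Theorem \ref{thmmain} are controlled purely in terms of $V$ and $\nabla V$ on a $2$-neighbourhood of $A_r$.

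Next I would reproduce the Lyapunov construction of Section \ref{seclyapchoice}: choose $W = e^{aV}$ for $0 < a < a_0$, so that $\LL W / W = a(\Delta V - (1-a)|\nabla V|^2)$, and using (\ref{condcat}.2) and (\ref{condcat}.3) conclude that condition (L) holds with $\phi = a(a_0 - a)|\nabla V|^2 + a\,\eta(V)$, a function tending to infinity at infinity by (\ref{condcat}.1) and (\ref{condcat}.2). Taking the level sets $A_r = \bar V_r = \{|V| < r\}$ as the exhausting family (as in Remark \ref{remlevel}), I would invoke the modified version of case (1) of Theorem \ref{thmmain} described in that remark, for which $\Phi(r) = \inf_{(\bar V_r)^c}\phi$, $g(r) = r+2$, and $G(r) = \sup_{\bar V_{r+2}}|\nabla V|^2$. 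This is exactly the route that produced \eqref{eqcat1} in the Euclidean case, and the same computation goes through verbatim on $M$ since nothing there used flatness — only the gradient structure of the Dirichlet form, the Lyapunov inequality, and a local (SPI) for $dx$.

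The remaining step is to simplify the resulting rate function using the extra hypothesis $|\nabla V|(x) \le \gamma(V(x))$ for large $\rho(x,o)$. Since $\phi \geq a\,\eta(V)$, one has $\inf_{(\bar V_r)^c}\phi \geq a\,\eta(r)$ (using monotonicity of $\eta$), hence $\Phi^{-1}(c/s)$ is controlled by $\eta^{-1}(c/s)$ up to constants; and on $\bar V_{r+2}$ the bound $|\nabla V|^2 \le \gamma(V)^2 \le \gamma(r+2)^2$ gives $\sup_{\bar V_{r+2}}|\nabla V|^2 \le \gamma^2(\eta^{-1}(c/s) + 2)$. Absorbing the additive constant $2$ into the constant $c$ (as $\gamma$ and $\eta^{-1}$ are monotone), plugging into the formula from \eqref{eqcat1} — which carries a factor $(1 + \sup|\nabla V|^2)^{n/2} e^{g(r)}$ against $\beta_0$ and absorbing the harmless $s^{-d/2}$ polynomial factor from $\beta_0(s)=c_1+c_2 s^{-d/2}$ into the exponential — yields the claimed $\beta(s) = C(1 + e^{\eta^{-1}(c/s)}\gamma^n(\eta^{-1}(c/s)))$.

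The main obstacle, and the reason only case (\ref{propcat}.1) generalizes (and not (\ref{propcat}.2)), is that case (2) of Theorem \ref{thmmain} / Proposition \ref{propbord} relied on the Euclidean extension/covering machinery (Stein extension, scaling of balls, the quantity $\mathrm{diam}^n/\mathrm{Vol}$) to control $\beta_r$ on level sets via second derivatives of $V$; on a general manifold one has no such explicit control of local (SPI) constants on the sets $\bar V_{r+2}$ beyond what $(H)$ and \cite{Croke} already give uniformly. So the delicate point is simply to check that the perturbation argument of Theorem \ref{thmmain} case (1) — the splitting $\int f^2 d\mu = \int_{A_r^c} + \int_{A_r}$, the identity $\int f^2(-\LL W/W)d\mu \le \int|\nabla f|^2 d\mu$ of \eqref{cruc-control}, and the localization of the global (SPI) for $dx$ via a Lipschitz cutoff $\psi$ — is insensitive to the replacement of $\R^n$ by $M$, which it is, the only manifold-specific input being the existence of a global (SPI) for the Riemannian volume, supplied by $(H)$.
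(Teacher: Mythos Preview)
Your proposal is correct and follows exactly the route the paper intends: the paper gives no detailed proof here, stating only that case (\ref{propcat}.1) of Theorem \ref{propcat} ``readily generalizes'' to the manifold setting with the Riemannian distance replacing the Euclidean one, the global (SPI) for $dx$ supplied by $(H)$ via \cite{Croke}, and that case (\ref{propcat}.2) fails to extend for precisely the reason you identify. Your write-up is a faithful expansion of that sketch, using the same Lyapunov function $W=e^{aV}$, the same level sets $\bar V_r$ via Remark \ref{remlevel}, and the same simplification through the bound $|\nabla V|\le \gamma(V)$.
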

The second point of Theorem \ref{propcat} is more delicate as it relies on finer conditions on the
manifold and the potential, it should however be possible to give mild additional assumptions
ensuring such a result (for instance the so called ``rolling ball condition''). Remark that it
extends to the manifold case Kusuoka-Stroock's result (giving life to Remark
(2.49) in their paper).

\subsection{Lyapunov condition $e^{a\rho^b}$.}

We suppose moreover here that $M$ is a Cartan-Hadamard manifold with lower bounded Ricci
curvature.

If we try to use $W=e^{a\rho^b}$ for $\rr\ge 1,$ since $\Delta \rr$ is
bounded above on $\{\rr\ge 1\}$ (see for example Th.0.4.10 in \cite{Wbook}),  {\bf (L)} holds for
$$\phi:= ab \rho^{b-2} \psi$$ with
$$\psi := \<\nn \rr^2, \nn V\>  - \left(c + ab \rho^{b}\right)$$ for some constant $c>0$ provided
$\psi$ is positive for large $\rr.$

We may then extend Lemma \ref{lemcurvature} in the manifold context.
\begin{lemma}\label{lemcurvaturem}
If \eqref{eqcurvature} holds, then $\rho\<\nn\rr, \nabla V \> \geq V -
V(o) + c_0 \, \rho^2/2$.
\end{lemma}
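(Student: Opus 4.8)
The plan is to follow the proof of Lemma~\ref{lemcurvature} line by line, replacing the euclidean segment $t\mapsto tx$ by the minimal geodesic issued from $o$. Since $M$ is a Cartan--Hadamard manifold, for every $x\neq o$ there is a unique unit-speed minimizing geodesic $\gamma:[0,\rho]\to M$ with $\gamma(0)=o$ and $\gamma(\rho)=x$, where $\rho=\rho(x,o)$; moreover the distance function is smooth away from $o$ and $\nabla\rho(\gamma(s))=\dot\gamma(s)$ along this geodesic.

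First I would set $h(s):=V(\gamma(s))$ for $s\in[0,\rho]$, so that $h'(s)=\langle\nabla V(\gamma(s)),\dot\gamma(s)\rangle$. Differentiating once more and using that $\gamma$ is a geodesic (hence $\nabla_{\dot\gamma}\dot\gamma=0$) and has unit speed, one gets $h''(s)=\Hess V(\dot\gamma(s),\dot\gamma(s))\geq c_0$ by \eqref{eqcurvature}.

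Next, in analogy with the auxiliary function $g(t)=t\,x.\nabla V(tx)$ used in Lemma~\ref{lemcurvature}, I would introduce $g(s):=s\,h'(s)$ on $[0,\rho]$, so that $g(0)=0$ and $g'(s)=h'(s)+s\,h''(s)\geq h'(s)+c_0 s$. Integrating this inequality between $0$ and $\rho$ gives
$$\rho\,h'(\rho)=g(\rho)-g(0)\;\geq\;\int_0^\rho h'(s)\,ds+c_0\,\frac{\rho^2}{2}=V(x)-V(o)+c_0\,\frac{\rho^2}{2}.$$
It then remains to rewrite the left-hand side via $\dot\gamma(\rho)=\nabla\rho(x)$: $\rho\,h'(\rho)=\rho\,\langle\nabla V(x),\nabla\rho(x)\rangle=\rho\,\langle\nabla\rho,\nabla V\rangle(x)$, which is exactly the claimed bound (the case $x=o$ being trivial).

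There is essentially no genuine obstacle here; the only facts that deserve a word are standard ones from Riemannian geometry: on a Cartan--Hadamard manifold $\rho^2$ (hence $\rho$ on $M\setminus\{o\}$) is smooth and $\nabla\rho$ is the velocity of the minimizing geodesic from $o$, and for a geodesic $\gamma$ one has $(V\circ\gamma)''=\Hess V(\dot\gamma,\dot\gamma)$ with no extra connection term. The only point requiring a little attention is the bookkeeping of the parametrization — unit speed on $[0,\rho]$ here versus speed $|x|$ on $[0,1]$ in the euclidean statement — which is why a factor $\rho$ appears on the left-hand side; since $|\dot\gamma|\equiv1$ the coefficient $c_0/2$ then comes out automatically.
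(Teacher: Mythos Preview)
Your proof is correct and follows essentially the same approach as the paper: both introduce the unit-speed minimal geodesic from $o$ to $x$, define $g(s)=s\langle\nabla\rho,\nabla V\rangle$ along it (your $s\,h'(s)$ is exactly this since $\dot\gamma=\nabla\rho$), use \eqref{eqcurvature} to bound $g'$ from below, and integrate. If anything, your version is slightly more explicit about why $(V\circ\gamma)''=\Hess_V(\dot\gamma,\dot\gamma)$ and about the role of the Cartan--Hadamard assumption.
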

\begin{proof} For $x\in M$, let $\xi:\ [0,\rr(x)]\to M$ be the
minimal geodesic from $o$ to $x$. Let

$$g(t)= t\<\nn\rr,\nn V\>(\xi_t),\ \ \ \  t\ge 0.$$ We have

$$g'(t) = \<\nn \rr, \nn V\>(\xi_t) +t \Hess_V(\nn
\rr,\nn\rr)(\xi_t)\ge c_0 t +\ff{d V(\xi_t)}{d t}.$$ This implies
the desired assertion by integrating both sides on $[0, \rr(x)].$
\end{proof}
\smallskip

We may thus state
\begin{proposition}\label{propcurvaturem} Let $M$ be a Cartan-Hadamard
manifold with Ricci curvature bounded below. Let $V$ satisfy
\eqref{eqcurvature}. Then one can find positive constants $c,C$ such
that $\mu$ satisfies some (SPI) with function $\beta$ (given below
for $s$ small enough) in the following cases
\begin{enumerate}
\item[(\ref{propcurvaturem}.1)] \quad $c_0 \geq 0$,  $V(x) \geq c' \rr^b$ for $\rho$ large enough
some $c'>0$ and $b>1$, $$\beta(s)= C \, e^{c (1/s)^{\frac{b}{2\left((b-1)\wedge 1\right)}}} \, .$$
\item[(\ref{propcurvaturem}.2)] \quad $c_0 \geq 0$,  $d' \rho^{b'} \geq V(x) \geq c' \rho^b$ for
$\rho$ large enough some $d', c'>0$ and $b' \geq b > 1$, $$\beta(s)=
C \, e^{c (1/s)^{\frac{b'}{b'+b -2}}} \, .$$
\item[(\ref{propcurvaturem}.3)] \quad $c_0 \leq 0$, for $\rr$ large
enough, $V(x) \geq (\varepsilon - c_0/2)  \rho^2$ for some
$\varepsilon
>0$, and $$\beta(s)= C \, e^{c (1/s)} \, .$$
\item[(\ref{propcurvaturem}.4)]  \quad $c_0 \leq 0$, for $\rho$ large enough, $d' \rho^{b'} \geq
V(x) \geq c' \rho^b$ and $\beta$ as in (\ref{propcurvature}.2).
\end{enumerate}
\end{proposition}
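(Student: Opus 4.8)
The plan is to follow the proof of Proposition~\ref{propcurvature} step by step, replacing its three Euclidean ingredients by their Riemannian counterparts. The Lyapunov condition \textbf{(L)} has already been recorded in the paragraph preceding the statement: for $W=e^{a\rho^b}$ with $b>1$ one uses that on a Cartan--Hadamard manifold $\rho$ is smooth away from $o$ and that $\Delta\rho$ is bounded above on $\{\rho\ge1\}$ (Laplacian comparison, valid since $\mathrm{Ric}$ is bounded below) to get $-\LL W/W\ge\phi:=ab\,\rho^{b-2}\psi$ with $\psi=\langle\nabla\rho^2,\nabla V\rangle-(c+ab\rho^b)$, so that \textbf{(L)} holds once $a$ is small and $\psi>0$ for large $\rho$. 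What remains is (i) to turn this into an explicit lower bound for $\phi$ on suitable sublevel sets, via Lemma~\ref{lemcurvaturem} and the growth assumptions, and (ii) to feed a local (SPI) into Theorem~\ref{thmmain}/Theorem~\ref{thmmainbis}; the latter is available here because the injectivity radius is infinite.

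For (i), Lemma~\ref{lemcurvaturem} gives $\langle\nabla\rho^2,\nabla V\rangle=2\rho\langle\nabla\rho,\nabla V\rangle\ge 2\bigl(V-V(o)\bigr)+c_0\rho^2$. In cases~(\ref{propcurvaturem}.1) and~(\ref{propcurvaturem}.2), where $c_0\ge0$ and $V\ge c'\rho^b$, choosing $a$ small makes $ab\rho^b$ negligible against $2V\ge2c'\rho^b$, so for large $\rho$ one has simultaneously $\psi\gtrsim V$ and $\psi\gtrsim\rho^b$, hence $\phi\gtrsim\rho^{b-2}V$. Since $\mathrm{Hess}\,V\ge c_0\ge0$ the sublevel sets $\bar V_r=\{V<r\}$ are geodesically convex, and for $r$ large $\bar V_r^c\subseteq\{\rho>R_0\}$ (as $V$ is locally bounded), so the asymptotic estimates apply there and yield $\Phi(r)=\inf_{\bar V_r^c}\phi\gtrsim r$ when $b\ge2$, $\Phi(r)\gtrsim r^{2(b-1)/b}$ when $1<b<2$ (using $\rho\le(V/c')^{1/b}$ to bound $\rho^{b-2}$ from below), and $\Phi(r)\gtrsim r^{(b'+b-2)/b'}$ under the two--sided bound $d'\rho^{b'}\ge V\ge c'\rho^b$. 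In cases~(\ref{propcurvaturem}.3) and~(\ref{propcurvaturem}.4), where $c_0\le0$, I would instead work on the sublevel sets of $\Theta:=V-\tfrac{c_0}{2}\rho^2$; these are geodesically convex because $\mathrm{Hess}\,\Theta=\mathrm{Hess}\,V-c_0\,\mathrm{Hess}(\rho^2/2)\ge c_0\,\mathrm{Id}-c_0\,\mathrm{Id}=0$, using $\mathrm{Hess}\,V\ge c_0\,\mathrm{Id}$, $c_0\le0$ and the Cartan--Hadamard bound $\mathrm{Hess}(\rho^2/2)\ge\mathrm{Id}$. On $\bar\Theta_r$ one has $V\asymp\Theta\asymp r$ (in case~(\ref{propcurvaturem}.3) because $V\ge(\varepsilon-c_0/2)\rho^2$, in case~(\ref{propcurvaturem}.4) because $V$ is squeezed between two powers of $\rho$), so the same bounds on $\Phi$ hold with $\bar V_r$ replaced by $\bar\Theta_r$.

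For (ii), since the injectivity radius is infinite the Riemannian volume satisfies a global (SPI) with $T=0$ and $\beta(s)=c\,s^{-d/2}$ (\cite{Croke}; see the start of this section), and the same estimate localized to unit geodesic balls --- whose volumes are bounded above and below by dimensional constants, by two--sided volume comparison (valid here since $\mathrm{sec}\le0$ and $\mathrm{Ric}$ is bounded below) --- gives a Nash inequality on each unit ball with a uniform constant. Covering a geodesically convex bounded domain $D$ by such balls with overlap number depending only on $d$ and patching by a partition of unity, exactly as in \S\ref{subsecnash}, then produces a Neumann (SPI) on $D$ with rate $\beta_D(s)=c(1+s^{-d/2})$, the constant $c$ growing at most polynomially in $\mathrm{diam}(D)$. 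Taking $A_r=\bar V_r$ (resp.\ $\bar\Theta_r$) and $B_r=\bar V_{2r}$ (resp.\ $\bar\Theta_{2r}$), which are geodesically convex with diameter polynomial in $r$, the local inequality \eqref{eqlocalspi} holds with $\beta_r$ polynomial in $1/s$ and in $r$. One then applies Theorem~\ref{thmmainbis} in situation~(2) of Theorem~\ref{thmmain} --- here $T=0$, so the oscillation term is $H(r)=\mathrm{Osc}_{B_r}(V)\le 2r-\inf V\lesssim r$ --- with $r(s)$ chosen there; the resulting $\alpha(s)$ is, up to replacing $\beta$ by $\beta_{r(s)}$, the expression of case~(2), of the form $C\,e^{2H(r(s))}\,\beta_{r(s)}\bigl(c\,s\,e^{-H(r(s))}\bigr)$. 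Inserting the bounds on $\Phi$, hence on $\Phi^{-1}$, from step (i) and absorbing the polynomial factors into the exponential yields, in each of the four cases, a (SPI) for $\mu$ with the $\beta$ displayed in the statement.

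The step I expect to be the real obstacle is the one in (ii): controlling how the local (SPI) constant on $\bar V_r$ (or $\bar\Theta_r$) depends on $r$. In the Euclidean case this rested on the exact scale invariance of the Sobolev--Poincar\'e inequality on convex bodies, but a Cartan--Hadamard manifold has no such scaling, so one must argue directly that the geometry is uniformly good at scale one (volume lower bound from $\mathrm{sec}\le0$; volume upper bound and $\Delta\rho$ control from the Ricci lower bound) and that the covering of $\bar V_r$ can be done with an overlap number independent of $r$ and a number of balls --- hence a final constant --- growing only polynomially, using $\mathrm{Hess}\,V\ge c_0$ to keep the boundary $\partial\bar V_r$ regular. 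Once this local input is secured, the rest is the mechanical substitution into Theorems~\ref{thmmain}--\ref{thmmainbis} sketched above, and the case analysis is identical to that of Proposition~\ref{propcurvature}.
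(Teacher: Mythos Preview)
Your proposal follows essentially the same route as the paper: Lyapunov function $W=e^{a\rho^b}$, the bound on $\phi$ via Lemma~\ref{lemcurvaturem}, convexity of the (shifted) sublevel sets of $V$, a local Nash/(SPI) on these sets, and then Theorem~\ref{thmmainbis} in situation~(2). The paper's proof is in fact shorter than yours; it simply says ``the proof follows exactly the same line as in the flat case'' and invokes the discussion of \S\ref{subsecnash} for the local Nash inequality on the convex level sets, without spelling out how this transfers to the Riemannian setting.

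Two remarks are worth making. First, for the $c_0\le 0$ cases the paper uses the auxiliary function $H=V+c_0\rho^2/2$ and claims its sublevel sets are convex via $\mathrm{Hess}_{\rho^2}\ge 2$; your choice $\Theta=V-\tfrac{c_0}{2}\rho^2$ with the computation $\mathrm{Hess}\,\Theta\ge c_0\,\mathrm{Id}-c_0\,\mathrm{Id}=0$ is the one for which the sign actually closes, so you have been more careful here. Second, you are right to flag the local (SPI) on geodesically convex $\bar V_r$ as the genuine point requiring work: the paper's appeal to ``the discussion in the previous subsection'' is to a Euclidean discussion (scale invariance of the Sobolev--Poincar\'e inequality on balls, extension operators with explicit norms, etc.), and you correctly note that on a Cartan--Hadamard manifold one must instead rely on uniform geometry at unit scale (volume comparison from $\mathrm{sec}\le 0$ and the Ricci lower bound) plus a covering/chaining argument. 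The paper does not provide this, so your proof is in fact more complete at this step. Since the final $\beta$ is exponential in a power of $1/s$, even a constant in the local inequality growing polynomially (or mildly exponentially) in $r$ would be absorbed, which makes the argument robust.
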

The first point of this proposition specialized tot the case $c_0>0$ enables us to recover
\cite[Th.1.3]{W01} which extends Bakry-Emery criterion to lower bounded Ricci curvature manifold.
It then extends the result to various $F$-Sobolev.
\begin{proof}
The proof follows exactly the same line than in the flat case so that case 1 and case 2  follows
once it is noted that  since $\Hess_V\ge 0$ implies the convexity of the level sets $\bar V_r,$ we
know that some Nash inequality holds on $\bar V_r$ according to the discussion in the previous
subsection and the boundedness of these level sets ensured by our hypotheses on $V$.

Let us now consider Case 3. Since $b=2$, our hypothesis implies that
for $2a < \varepsilon$, $\phi \geq D V$. But \eqref{eqcurvature} and
$\Hess_{\rr^2}\ge 2$ on Cartan-Hadamard manifolds imply  that the
level sets of $x \mapsto H= V+c_0 \rho^2/2$ are convex. Since $V \geq
D \rho^2$, one has $c r \leq V \leq r$ on $\bar H_r$. We may thus
mimic case 1, just replacing $\bar V_r$ by $\bar H_r$. Case 4 is
similar to the previous one just improving the bound on $\phi$ as in
case 2.
\end{proof}

\begin{remark}
Remark that in full generality, according to \cite{W04} Theorem 1.2 and the recent paper \cite{CW07},
there always exists  $T\in C^\infty(M)$ such that $d\lambda:= e^{-T(x)}d x$ satisfies a
logarithmic Sobolev inequality hence (SPI) with
 $\beta(s)= e^{s^{-1}}.$ Of course for practical purposes, this very general fact is not
 completely useful since $T$ is unknown.
\end{remark}


\bigskip
\bibliographystyle{plain}

\begin{thebibliography}{10}

\bibitem{logsob}
C.~An{\'e}, S.~Blach{\`e}re, D.~Chafa{\"\i}, P.~Foug{\`e}res, I.~Gentil,
  F.~Malrieu, C.~Roberto, and G.~Scheffer.
\newblock {\em Sur les in{\'e}galit{\'e}s de {S}obolev logarithmiques},
  volume~10 of {\em Panoramas et Synth{\`e}ses}.
\newblock Soci{\'e}t{\'e} {M}ath{\'e}matique de {F}rance, Paris, 2000.

\bibitem{BCG}
D.~Bakry, P.~Cattiaux, and A.~Guillin.
\newblock Rate of convergence for ergodic continuous {M}arkov processes :
  {L}yapunov versus {P}oincar\'e.
\newblock To appear in J. Funct. Anal.. Available on Mathematics
  ArXiv.math.PR/0703355, 2007.

\bibitem{BCR1}
F.~Barthe, P.~Cattiaux, and C.~Roberto.
\newblock Interpolated inequalities between exponential and {G}aussian,
  {O}rlicz hypercontractivity and isoperimetry.
\newblock {\em {R}ev. {M}at. {I}ber.}, 22(3):993--1066, 2006.

\bibitem{BCR3}
F.~Barthe, P.~Cattiaux, and C.~Roberto.
\newblock Isoperimetry between exponential and {G}aussian.
\newblock {\em Electronic J. Prob.}, 12:1212--1237, 2007.

\bibitem{BK}
F.~Barthe and A.~V. Kolesnikov.
\newblock Mass transport and variants of the logarithmic {S}obolev inequality.
\newblock {P}reprint. Available on Mathematics ArXiv.math.PR/0709.3890, 2007.

\bibitem{Boj}
B.~Bojarski.
\newblock Remarks on {S}obolev imbedding inequalities.
\newblock In {\em Complex {A}nalysis}, volume 1351 of {\em Lecture Notes in
  Math.}, pages 52--68. Springer, Berlin, 1987.

\bibitem{BK96}
S.~M. Buckley and P.~Koskela.
\newblock Sobolev-{P}oincar\'e implies {J}ohn.
\newblock {\em Math. {R}es. {L}ett.}, 2:881--901, 1995.

\bibitem{CKS}
E.~Carlen, S.~Kusuoka, and D.~Stroock.
\newblock Upper bounds for symmetric {M}arkov transition functions.
\newblock {\em Ann. Inst. Henri Poincar\'e. Prob. Stat.}, 23:245--287, 1987.

\bibitem{cat5}
P.~Cattiaux.
\newblock Hypercontractivity for perturbed diffusion semi-groups.
\newblock {\em Ann. Fac. des Sc. de Toulouse}, 14(4):609--628, 2005.

\bibitem{CatGui3}
P.~Cattiaux and A.~Guillin.
\newblock Trends to equilibrium in total variation distance.
\newblock To appear in Ann. Institut H. Poincar\'e. Available on Mathematics
  ArXiv.math.PR/0703451, 2007.

\bibitem{CW07}
X.~Chen and F.~Y. Wang.
\newblock Construction of larger {R}iemannian metrics with bounded sectional
  curvatures and applications.
\newblock To appear in \em Bull. London Math. Soc\rm, 2007.

\bibitem{Croke}
C.~B. Croke.
\newblock Some isoperimetric inequalities and eigenvalue estimates.
\newblock {\em Ann. Sci. \'Ec. Norm. Super.}, 13:419--435, 1980.

\bibitem{Dav}
E.~B. Davies.
\newblock {\em Heat kernels and spectral theory}.
\newblock Cambridge University Press, 1989.

\bibitem{DFG}
R.~Douc, G.~Fort, and A.~Guillin.
\newblock Subgeometric rates of convergence of $f$-ergodic strong {M}arkov
  processes.
\newblock {P}reprint. Available on Mathematics ArXiv.math.ST/0605791, 2006.

\bibitem{DMT}
N.~Down, S.~P. Meyn, and R.~L. Tweedie.
\newblock Exponential and uniform ergodicity of {M}arkov processes.
\newblock {\em Ann. Prob.}, 23(4):1671--1691, 1995.

\bibitem{EG}
L.~C. Evans and R.~F. Gariepy.
\newblock {\em Measure theory and fine properties of functions}.
\newblock CRC Press, 1992.

\bibitem{Forro}
G.~Fort and G.~O. Roberts.
\newblock Subgeometric ergodicity of strong {M}arkov processes.
\newblock {\em Ann. Appl. Prob.}, 15(2):1565--1589, 2005.

\bibitem{GGM1}
I.~Gentil, A.~Guillin, and L.~Miclo.
\newblock Modified logarithmic {S}obolev inequalities and transportation
  inequalities.
\newblock {\em Probab. Theory Related Fields}, 133(3):409--436, 2005.

\bibitem{Haj}
P.~Hajlasz.
\newblock Sobolev inequalities, truncation method, and {J}ohn domains.
\newblock {\em Report. Univ. Jyv\"{a}skyl\"{a}}, 83:109--126, 2001.
\newblock also see the {W}eb page of the author.

\bibitem{Kol}
A.~V. Kolesnikov.
\newblock Modified log-{S}obolev inequalities and isoperimetry.
\newblock {P}reprint. Available on Mathematics ArXiv.math.PR/0608681, 2006.

\bibitem{KS85}
S.~Kusuoka and D.~Stroock.
\newblock Some boundedness properties of certain stationary diffusion
  semigroups.
\newblock {\em J. Func. Anal.}, 60:243--264, 1985.

\bibitem{LO00}
R.~Lata{\l}a and K.~Oleszkiewicz.
\newblock Between {S}obolev and {P}oincar\'e. in geometric aspects of
  {F}unctional {A}nalysis.
\newblock {\em Lect. Notes Math.}, 1745:147--168, 2000.

\bibitem{r-w}
M.~R{\"o}ckner and F.~Y. Wang.
\newblock Weak {P}oincar\'e inequalities and {$L\sp 2$}-convergence rates of
  {M}arkov semigroups.
\newblock {\em J. Funct. Anal.}, 185(2):564--603, 2001.

\bibitem{r-w2}
M.~R{\"o}ckner and F.~Y. Wang.
\newblock Supercontractivity and ultracontractivity for (non-symmetric)
  diffusion semi-groups on manifolds.
\newblock {\em Forum {M}ath.}, 15(6):893--921, 2003.

\bibitem{SCsob}
L.~Saloff-Coste.
\newblock {\em Aspects of {S}obolev type inequalities}.
\newblock Cambridge University Press, 2002.

\bibitem{Stein}
E.~Stein.
\newblock {\em Singular integrals and differentiability properties of
  functions}.
\newblock Princeton University Press, 1970.

\bibitem{w00}
F.~Y. Wang.
\newblock Functional inequalities for empty essential spectrum.
\newblock {\em J. Funct. Anal.}, 170(1):219--245, 2000.

\bibitem{W01}
F.~Y. Wang.
\newblock Logarithmic {S}obolev inequalities: conditions and counterexamples.
\newblock {\em J. Oper. Th.}, 46:183--197, 2001.

\bibitem{Wbook}
F.~Y. Wang.
\newblock {\em Functional inequalities, {M}arkov processes and {S}pectral
  theory}.
\newblock Science Press, Beijing, 2004.

\bibitem{W04}
F.~Y. Wang.
\newblock Functional inequalities on arbitrary {R}iemannian manifolds.
\newblock {\em J. Math. Anal. Appl.}, 30:426--435, 2004.

\bibitem{wu1}
L.~Wu.
\newblock Uniformly integrable operators and large deviations for {M}arkov
  processes.
\newblock {\em J. Funct. Anal.}, 172(2):301--376, 2000.

\end{thebibliography}

\end{document}